\documentclass[12pt]{amsart}
\setcounter{secnumdepth}{5}
\usepackage[T1]{fontenc}
\usepackage[latin1]{inputenc}
\usepackage{typearea}
\usepackage{geometry}
\usepackage{ulem}
\usepackage{amsmath}
\usepackage{amssymb}
\usepackage{latexsym}
\usepackage{enumerate}
\usepackage{amsthm}
\usepackage[all]{xy}
\usepackage{hhline}
\usepackage{epsf} 
\usepackage{cite}
\setlength{\textheight}{640pt}

\newtheorem{theorem}{{\sc Theorem}}[section]
\newtheorem{lemma}[theorem]{{\sc Lemma}}
\newtheorem{proposition}[theorem]{{\sc Proposition}}

\theoremstyle{remark}
\newtheorem{remark}[theorem]{{\sc Remark}}

\theoremstyle{definition}

%
%
\newcommand{\R}{\mathbb{R} }
\newcommand{\N}{\mathbb{N} }

\newcommand{\B}{\mathcal{B}}
\newcommand{\F}{\mathcal{F}}

\newcommand{\D}{\mathcal{D}}
\newcommand{\W}{\mathcal{W}}
\newcommand{\K}{\mathcal{K}}
\newcommand{\Z}{\mathbb{Z}}

\newcommand{\calL}{\mathcal{L}}

\newcommand{\htilde}{\tilde{h}}
\newcommand{\ftilde}{\tilde{f}}

\providecommand{\abs}[1]{\lvert #1\rvert}

\providecommand{\fnorm}[1]{\lVert #1\rVert_\infty}

%
%

\renewcommand{\phi}{\varphi}
\renewcommand{\epsilon}{\varepsilon}

\renewcommand{\rho}{\varrho}

\begin{document}
\title[Stein's method for the half-normal distribution with applications]{Stein's method for the half-normal distribution with applications to limit theorems related to the simple symmetric random walk}
\author{Christian D\"obler}
\thanks{Universit\'{e} du Luxembourg, Unit\'{e} de Recherche en Math\'{e}matiques \\
christian.doebler@uni.lu\\
{\it Keywords:} Stein's method, half-normal distribution, simple random walk, density approach, Kolmogorov distance, Wasserstein distance }
\begin{abstract}
We develop Stein's method for the half-normal distribution and apply it to derive rates of convergence in distributional limit theorems for three statistics of the simple symmetric random walk: the maximum value, the number of returns to the origin and the number of sign changes up to a given time $n$. 
We obtain explicit error bounds with the optimal rate $n^{-1/2}$ for both the Kolmogorov and the Wasserstein metric. 
In order to apply Stein's method, we compare the characterizing operator of the limiting half-normal distribution with suitable characterizations of the discrete approximating distributions, exploiting a recent technique by Goldstein and Reinert \cite{GolRei13}.  
\end{abstract}

\maketitle

\section{Introduction}\label{intro}
This article concerns the rate of convergence issue for three limit theorems in the surroundings of the one-dimensional simple symmetric random walk (SRW). By this we mean 
the discrete time stochastic process $(S_n)_{n\geq0}$ defined by $S_0:=0$ and $S_n:=\sum_{j=1}^n X_j$, $n\geq1$, where $X_1,X_2,\dotsc,$ are iid random variables 
with $P(X_1=1)=P(X_1=-1)=1/2$. It has been known for a long time that various statistics of the process $(S_n)_{n\geq0}$ exhibit a quite  counter intuitive distributional 
behaviour, see e.g. Chapter 3 of \cite{Fel1} for some qualitative limit theorems.\\
As to quantitative results, in \cite{Doe12b} a rate of convergence for the arcsine law was proved using Stein's method for Beta distributions (see also \cite{GolRei13} for an improvement of this result with respect to the computation of an explicit constant of convergence and also \cite{Doe14} for a further improvement of this constant).\\ 
Here, we will focus on limit theorems which state convergence towards the distribution $\mu$ of $Y:=\abs{Z}$, where $Z\sim N(0,1)$ is standard normally distributed. This distribution is commonly known as the (standard) \textit{half-normal distribution}. The technique of proof will be to compare the Stein characterization of $Y$ with a suitable characterization for the approximating discrete distribution, as was proposed by Goldstein and Reinert \cite{GolRei13} and also used in \cite{Doe12b}. Generally, this technique is promising, whenever a concrete formula for the probability mass function of the discrete distribution is at hand, which yields a Stein characterization similar to the one for the limiting distribution.
 Note that even though the probability mass function and, hence, the discrete distribution must be known for this approach to be applicable, it might still be of interest to approximate by an easier to handle absolutely continuous distribution. In fact, already the de Moivre-Laplace theorem gives an approximation to the known binomial by the normal distribution.\\
In the case of $Y=\abs{Z}$ a suitable Stein characterization is easily found, using the \textit{density approach} of Stein's method (see \cite{ChSh}, \cite{EiLo10} or \cite{CGS} for instance). Although we could simply quote the theory and general bounds on the solution to the Stein equation from \cite{CGS} or \cite{ChSh}, 
for 
example, we prefer deriving our own bounds, which usually yield better constants.\\\\
 The rate of convergence results in this paper are always with respect to a certain probability metric, which is defined via test functions. Thus, if $\mu$ and $\nu$ are two probability measures on $(\R,\B)$ and $\mathcal{H}$ is some class of measurable test functions that are integrable with respect to $\mu$ and $\nu$, then we define 
the distance
\begin{equation*}
 d_\mathcal{H}(\mu,\nu):=\sup_{h\in\mathcal{H}}\Bigl|\int_\R h d\mu-\int_\R h d\nu\Bigr|\,.
\end{equation*}
For example, if $\mathcal{W}$ is the class of Lipschitz continuous functions on $\R$ with Lipschitz constant not greater than $1$ and if $\mu$ and $\nu$ both have first moments, then $d_\W(\mu,\nu)$ is the \textit{Wasserstein distance} between $\mu$ and $\nu$. On the other hand, if $\K$ is the class of functions $h_z:=1_{(-\infty,z]}$, $z\in\R$, then 
we obtain the \textit{Kolmogorov distance}
\[d_\K(\mu,\nu)=\sup_{z\in\R}\Bigl|\mu\bigl((-\infty,z]\bigr)-\nu\bigl((-\infty,z]\bigr)\Bigr| \,,\]
which is particularly natural from a statistician' s point of view.
For real-valued random variables $X$ and $Y$ we write $d_\mathcal{H}(X,Y)$ for $d_\mathcal{H}\bigl(\calL(X),\calL(Y)\bigr)$.\\\\
Now, we introduce the statistics of $(S_n)_{n\geq0}$, which converge in distribution to $Y$. First, consider the number $K_n$ of times that the random walk returns to the origin up to time $n$, i.e.
\begin{equation}\label{Kn}
 K_n:=\bigl|\{1\leq k\leq n=2m\,:\,S_k=0\}\bigr|\,.
\end{equation}
The variable $K_n$ is sometimes called the \textit{local time} or \textit{occupation time} at $0$ by time $n$.
From Theorem 7, Section 5 of \cite{Fel49} (see also Equation (5.31) there) it is known that $K_n/\sqrt{n}\stackrel{\D}{\rightarrow} Y$ as $n\to\infty$ (see also Equation (1) of \cite{CH49}). 
Note that intuition might lead us to the (false) conclusion that the number of returns to the origin should roughly grow linearly with the time $n$. Here is a theorem which gives error bounds for this distributional convergence.

\begin{theorem}\label{theo2}
Let $n=2m$ be an even positive integer. Then, with $W:=W_n:=K_n/\sqrt{n}$ 
\begin{align*}
d_\W(W,Y)&\leq\frac{1}{\sqrt{n}}\left(\frac{2}{\pi}+2\right)+\frac{1}{n}\sqrt{\frac{2}{\pi}}\quad\text{and}\\
d_\K(W,Y)&\leq\frac{1}{\sqrt{n}}\left(\frac{3+2\sqrt{2}}{\sqrt{2\pi}}+\frac{3}{4}\right)+\frac{3}{2n}\,.
\end{align*}
\end{theorem}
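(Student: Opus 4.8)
My plan is to follow the route used for Theorem~\ref{theo1}: develop Stein's method for $Y=\abs Z$ and compare its characterizing operator with an exact difference equation for the mass function of $K_n$. The density of $Y$ is $p(x)=\sqrt{2/\pi}\,e^{-x^2/2}$ on $[0,\infty)$, so the density approach produces the operator $(\mathcal Af)(x)=f'(x)-xf(x)$ on absolutely continuous $f$ with $f(0)=0$, and $\E[f'(Y)-Yf(Y)]=0$. For $h\in\W$ (resp.\ $h=h_z\in\K$) put $\tilde h:=h-\E h(Y)$; the Stein equation $f'(x)-xf(x)=\tilde h(x)$, $f(0)=0$, is solved via the integrating factor $e^{-x^2/2}$ by
\[
 f(x)=e^{x^2/2}\int_0^x\tilde h(t)\,e^{-t^2/2}\,dt=-e^{x^2/2}\int_x^\infty\tilde h(t)\,e^{-t^2/2}\,dt ,
\]
and from this, together with Mills-ratio estimates, I would read off explicit (ideally optimal) bounds on $\fnorm f,\fnorm{f'},\fnorm{f''}$ in the Wasserstein case and on $\fnorm f,\fnorm{f'}$ in the Kolmogorov case; in the latter I also record that $f'$ is continuous off $z$ with a single downward jump of height $\le1$ at $z$, and that $\abs{f'(0)}=\abs{\tilde h(0)}\le\E Y=\sqrt{2/\pi}$ (resp.\ $\le1$).

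For the discrete side, note that returns occur only at even times, so $K_n=K_{2m}$ lives on $\{0,\dots,m\}$ with the classical mass function $p_k:=P(K_{2m}=k)=\binom{2m-k}{m}2^{-(2m-k)}$; thus $p_0=u_{2m}:=\binom{2m}{m}2^{-2m}$, $\E K_{2m}=\sum_{j=1}^m u_{2j}=(2m+1)u_{2m}-1$, and $\max_k p_k=p_0=u_{2m}<(\pi m)^{-1/2}=n^{-1/2}\sqrt{2/\pi}$. Taking ratios of consecutive masses gives
\[
 (2m-k)(p_{k+1}-p_k)=-k\,p_k,\qquad k=0,\dots,m,\quad p_{m+1}:=0 ,
\]
the discrete analogue of $p'(x)=-xp(x)$. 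A summation by parts then yields, for any $g$ with the convention $g(-1):=0$, the exact identity
\[
 \E\bigl[(2m-K_{2m})\bigl(g(K_{2m})-g(K_{2m}-1)\bigr)-K_{2m}\,g(K_{2m})\bigr]=\E\bigl[g(K_{2m}-1)\bigr] ,
\]
whose right-hand side is a small perturbation once $g$ has size $n^{-1/2}$.

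To combine the two, given $h$ set $f:=f_h$, $W:=K_{2m}/\sqrt n$ and $g(k):=n^{-1/2}f(k/\sqrt n)$ (with $g(-1):=0$). Then $K_{2m}g(K_{2m})=Wf(W)$ identically, and a first-order Taylor expansion gives, for $k\ge1$, $(2m-k)(g(k)-g(k-1))=(1-k/n)\,f'(k/\sqrt n)+R_k$ with $\abs{R_k}\le\frac{2m-k}{2n^{3/2}}\fnorm{f''}\le\frac1{2\sqrt n}\fnorm{f''}$, while the $k=0$ term equals $f'(0)p_0$. Substituting into the identity above and using the Stein equation, $\E h(W)-\E h(Y)=\E[f'(W)-Wf(W)]$ collapses to a sum of the terms $\E[g(K_{2m}-1)]$, $n^{-1}\E[K_{2m}f'(W)]$, $\E[R_{K_{2m}}]$ and $f'(0)p_0$; expanding $\E[g(K_{2m}-1)]$ once more as $n^{-1/2}\E[f(W)]-n^{-1}\E[f'(\zeta)\,1_{\{K_{2m}\ge 1\}}]$ yields the $n^{-1}$ term visible in the statement. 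Bounding each piece by $\fnorm f,\fnorm{f'},\fnorm{f''}$ times the appropriate combinatorial factor --- using $\E K_{2m}/n<u_{2m}<n^{-1/2}\sqrt{2/\pi}$ and $\abs{f'(0)}p_0\le\sqrt{2/\pi}\,u_{2m}$ --- and then inserting the Stein-solution bounds gives the Wasserstein estimate. In the Kolmogorov case $f''$ is unavailable at $z$, so the Taylor step fails for the at most one index $k^\ast$ whose interval $[(k-1)/\sqrt n,k/\sqrt n]$ contains $z$; there I would bound $\abs{R_{k^\ast}}$ crudely by $(2m-k^\ast)/n$ times the total variation $1+O(n^{-1/2})$ of $f'$ across that interval, hence by $1+O(n^{-1/2})$, and then multiply by $p_{k^\ast}\le n^{-1/2}\sqrt{2/\pi}$, so the contribution remains $O(n^{-1/2})$; tracking constants gives the Kolmogorov bound. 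The main obstacle is exactly this Kolmogorov step together with the optimisation of constants: the clean $O(n^{-1/2})$ control of the jump of $f'$, and the sharp self-derived bounds on $f,f',f''$, are where the actual work lies, with the boundary bookkeeping at $k=0$ (forced by $f(0)=0$) a minor additional nuisance.
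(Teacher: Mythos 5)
Your discrete side is fine: the identity you derive by summation by parts from $(2m-k)(p_{k+1}-p_k)=-kp_k$ is exactly equivalent to the characterization the paper obtains from Proposition \ref{golreingen} (Lemma \ref{Knle}), your choice $g(k)=n^{-1/2}f(k/\sqrt n)$ only changes the normalization, and the Wasserstein half of your plan is essentially the paper's argument: Taylor with $\fnorm{f''}$, the expectation bound $E[W]\le\sqrt{2/\pi}$ via $E[K_n]=(2m+1)u_{2m}-1$, and the Stein-factor bounds $\fnorm{f_h}\le\fnorm{h'}$, $\fnorm{f_h'}\le\sqrt{2/\pi}\fnorm{h'}$, $\fnorm{f_h''}\le2\fnorm{h'}$ (which you still have to actually derive, as the paper does in Section \ref{proofs}, but your Mills-ratio plan is the right one).

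The Kolmogorov part, however, has a genuine gap. You keep the Taylor/$f''$ step on every lattice interval except the single one containing $z$, which tacitly requires a bound on $f_z''$ away from $z$ that is uniform in $z$ --- and no such bound exists. Explicitly, for $x<z$ one has $f_z(x)=\bigl(1-F(z)\bigr)F(x)/p(x)$, hence
\begin{equation*}
f_z''(x)=\frac{d}{dx}\bigl(xf_z(x)\bigr)=2\bigl(1-\Phi(z)\bigr)\Bigl(x+(1+x^2)\frac{2\Phi(x)-1}{2\phi(x)}\Bigr)\,,
\end{equation*}
and as $x\uparrow z$ this is of order $z$ for large $z$ (since $1-\Phi(z)\sim\phi(z)/z$), so $\sup_{x\neq z}\abs{f_z''(x)}$ is unbounded in $z$; the trouble is not confined to the one interval containing the jump but to the whole region just left of $z$, and in your setting $z$ ranges up to order $\sqrt n$. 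Also note you only promised bounds on $\fnorm{f_z},\fnorm{f_z'}$ for indicators, so the Taylor step on the ``good'' intervals is unsupported even formally. The paper avoids $f_z''$ altogether: it substitutes the Stein equation back in, writing $f'(W)-f'(t)=\bigl(Wf(W)-tf(t)\bigr)+\bigl(h_z(W)-h_z(t)\bigr)$; the first difference is handled through $\int_t^W(f(s)+sf'(s))\,ds$ with the factor $\abs{s}\le\max(n^{-1/2},W)$ integrated against the law of $W$ (so only $E[W]\le\sqrt{2/\pi}$ and $\fnorm{f_z}\le\tfrac12$, $\fnorm{f_z'}\le1$ are needed), and the second by the point probability $P\bigl(K_n=1+\lfloor\sqrt n z\rfloor\bigr)\le 2/\sqrt{\pi n}$ --- your treatment of the bad interval mirrors this last step. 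Your scheme can be repaired, but only by importing the same structural input the paper uses (Lemma \ref{bounds2}(c)): since $x\mapsto xf_z(x)$ is increasing and takes values in $[0,1]$, the oscillation of $f_z'$ on the $k$-th good interval is the increment of $xf_z(x)$ there, and summing these increments telescopically against $\max_k p_k\le\sqrt{2/\pi}\,n^{-1/2}$ gives an $O(n^{-1/2})$ total; without this (or the paper's substitution trick) the Kolmogorov bound does not follow from what you have written, and the specific constants in the statement would in any case require the sharper bookkeeping the paper carries out.
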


It should be mentioned that the rate $n^{-1/2}$ for $d_\K(W,Y)$ was also given in \cite{PRR13} but they did not compute an explicit constant.
Next, consider 
\begin{equation}\label{Mn}
 M_n:=\max_{0\leq k\leq n} S_k \in\{0,1,\dots,n\}\,.
\end{equation}
Then, $M_n/\sqrt{n}\stackrel{\D}{\rightarrow} Y$ as $n\to\infty$. This follows by an application of the CLT to the result of Theorem 1 in Section 7 of Chapter 3 in \cite{Fel1}. We will prove the following quantitative version of this result.

\begin{theorem}\label{theo1}
Let $n=2m$ be an even positive integer. Then, with $W:=W_n:=M_n/\sqrt{n}$ 
\begin{align*}
d_\W(W,Y)&\leq\frac{1}{\sqrt{n}}\Bigl(3+\frac{2}{\pi}\Bigr)\quad\text{and}  \\
d_\K(W,Y)&\leq\frac{1}{\sqrt{n}}\Bigl(4\sqrt{\frac{2}{\pi}}+\frac{1}{2}\Bigr)+\frac{2}{n}\,.
\end{align*}
\end{theorem}

Finally, consider the number $C_{n}$ of sign changes by the random walk up to time $n:=2m+1$, $m\in\N$, i.e.
\begin{equation}\label{Cn}
 C_n:=C_{2m+1}:=\bigl|\{1\leq k\leq 2m\,:\, S_{k-1}\cdot S_{k+1}<0\}\bigr|\,.
\end{equation}
Then, Theorem 2 in Section 5 of Chapter 3 in \cite{Fel1} states that $2C_{2m+1}/\sqrt{2m+1}\stackrel{\D}{\rightarrow} Y$ as $m\to\infty$. Again, we obtain a quantitative version of this result. 

\begin{theorem}\label{theo3}
Let $m$ be a positive integer and $n:=2m+1$.\\
 Then, with $W:=W_m:=2C_{2m+1}/\sqrt{2m+1}$ 
\begin{align*}
d_\W(W,Y)&\leq\frac{1}{\sqrt{n}}\Bigl(4+\frac{2}{\pi}\Bigr)+\sqrt{\frac{2}{\pi}}\frac{1}{n}
+\frac{2\sqrt{2}}{\pi}\frac{1}{n^{3/2}}\quad\text{and}\\
d_\K(W,Y)&\leq\frac{1}{\sqrt{n}}\Bigl(\frac{2\sqrt{2}+4}{\sqrt{\pi}}+\frac{3}{2}\Bigr)+\frac{3}{n}+\frac{4}{\sqrt{\pi}}\frac{1}{n^{3/2}}\,.
\end{align*}
\end{theorem}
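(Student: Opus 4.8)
The plan is to run the scheme underlying Theorems~\ref{theo1} and~\ref{theo2}: from the (known) probability mass function of $C_{2m+1}$ I would extract a discrete Stein-type identity for $W=W_m$ and compare it, term by term, with the half-normal Stein operator $f\mapsto f'(w)-wf(w)$ through the solution of the corresponding Stein equation. The starting point is the classical reflection identity (see \cite{Fel1})
\[
P(C_{2m+1}=j)=2\,P(S_{2m+1}=2j+1)=2^{-2m}\binom{2m+1}{m+j+1},\qquad 0\le j\le m,
\]
equivalently $2C_{2m+1}+1\stackrel{\D}{=}\abs{S_{2m+1}}$, whose decisive feature is the clean ratio $r_j:=P(C_{2m+1}=j+1)/P(C_{2m+1}=j)=(m-j)/(m+j+2)$.

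Writing $p_j:=P(C_{2m+1}=j)$ and $w_j:=2j/\sqrt n$ --- so that $W$ lives on a lattice of mesh $2/\sqrt n$ with $P(W=w_j)=p_j$ --- Abel summation together with $p_{m+1}=0$ and $w_0=0$ gives, for every $g$ with $g(0)=0$,
\[
\E\bigl[(1-r_J)g(W)\bigr]=\E\bigl[(g(W)-g(W-\tfrac{2}{\sqrt n}))\,1_{\{W>0\}}\bigr],
\]
$J$ being the random index of $W$; and a short computation using $2m=n-1$ yields
\[
\tfrac{\sqrt n}{2}(1-r_J)=W+R(W),\qquad R(W)=\frac{(2-W^2)\,n-3W\sqrt n}{\sqrt n\,(n+3+W\sqrt n)},
\]
so that $R(W)$ is of order $(2-W^2)/\sqrt n$ on the bulk $W=O(1)$.

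I would then let $f=f_h$ solve $f'(w)-wf(w)=h(w)-\E h(Y)$, $f(0)=0$, and use the bounds on $\fnorm{f_h},\fnorm{f_h'},\fnorm{f_h''}$ obtained earlier (for Lipschitz $h$, resp.\ for $h=1_{(-\infty,z]}$). Substituting $g=f_h$ above, replacing $\E[Wf_h(W)]$ by $\tfrac{\sqrt n}{2}\E[(1-r_J)f_h(W)]-\E[R(W)f_h(W)]$, and writing the telescoped increment as an integral of $f_h'$, the Stein equation yields
\[
\E h(W)-\E h(Y)=p_0f_h'(0)+\tfrac{\sqrt n}{2}\,\E\bigl[1_{\{W>0\}}\int_{W-2/\sqrt n}^{W}(f_h'(W)-f_h'(t))\,dt\bigr]+\E\bigl[R(W)f_h(W)\bigr].
\]
Three quantities then have to be estimated: the \emph{Taylor remainder}, at most $\fnorm{f_h''}/\sqrt n$ in the Wasserstein case; the \emph{drift error} $\E[R(W)f_h(W)]$, bounded by $\fnorm{f_h}\,\E\abs{R(W)}\le\fnorm{f_h}(\E\abs{2-W^2}/\sqrt n+3\,\E W/n)$, which is $O(1/\sqrt n)$ since $\E W^2=1+1/n-\sqrt{8/(\pi n)}\le 1+1/n$ and $\E W=(\E\abs{S_{2m+1}}-1)/\sqrt n=\sqrt{2/\pi}+O(1/\sqrt n)$ (both read off from $2C_{2m+1}+1\stackrel{\D}{=}\abs{S_{2m+1}}$ and the explicit value of $\E\abs{S_{2m+1}}$); and the \emph{atom term} $p_0f_h'(0)$, controlled by $p_0\fnorm{f_h'}$, where $p_0=\tfrac{2m+1}{m+1}\binom{2m}{m}2^{-2m}$ and a Stirling estimate gives $p_0\le 2\sqrt{2/\pi}\,n^{-1/2}+O(n^{-3/2})$. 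In the Kolmogorov case one must in addition absorb the unit downward jump of $f_{h_z}'$ at $z$: this turns the Taylor-remainder bound into $\fnorm{f_h''}/\sqrt n+P(W\in(z,z+\tfrac{2}{\sqrt n}])$, and an interval of length equal to the mesh carries at most one atom, so $P(W\in(z,z+\tfrac{2}{\sqrt n}])\le p_0$ too.

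Finally I would collect the pieces, insert the explicit constants for $\fnorm{f_h},\fnorm{f_h'},\fnorm{f_h''}$ together with the bounds $p_0\le 2\sqrt{2/\pi}\,n^{-1/2}+O(n^{-3/2})$ and $\E W^2\le 1+1/n$, and simplify; the $1/n$ and $1/n^{3/2}$ terms in the statement arise exactly from the $1/n$ in $\E W^2$, from $\E W$, from the atom probability $p_0$, and from the mismatch $n=2m+1$ versus $2m$ in the Stirling bound. The main obstacle is the Kolmogorov estimate: since $f_{h_z}$ is then only piecewise $C^1$, replacing a lattice difference by a derivative no longer costs merely $\fnorm{f_h''}/\sqrt n$ but generates the concentration term $P(W\in(z,z+\tfrac{2}{\sqrt n}])$, and its sharp evaluation via central binomial coefficients --- together with the somewhat delicate tracking of every constant down to the claimed values --- is where the real work lies; once the identity $\tfrac{\sqrt n}{2}(1-r_J)=W+R(W)$ is in hand, the Wasserstein bound is comparatively routine.
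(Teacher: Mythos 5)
Your discrete identity is sound and essentially re-derives what the paper obtains from Proposition~\ref{golreingen}: the ratio $r_j=(m-j)/(m+j+2)$ is correct, the Abel-summation identity is legitimate because $f_h(0)=0$, and the resulting exact formula with the atom term $p_0f_h'(0)$, the Taylor remainder and the drift $\E[R(W)f_h(W)]$ is valid. The genuine problem is the Kolmogorov case. You bound the Taylor remainder by $\fnorm{f_{h_z}''}/\sqrt n$ plus the concentration term, but no bound on $f_{h_z}''$ is available in Lemma~\ref{bounds2}, and none can be: away from the jump one has $f_{h_z}''(x)=\frac{d}{dx}\bigl(xf_{h_z}(x)\bigr)$, and for $x$ just below $z$ this equals $2\bigl(1-\Phi(z)\bigr)\bigl((1+z^2)(\Phi(z)-\tfrac12)+z\phi(z)\bigr)/\phi(z)$, which grows like $z$, so the remainder is not $O(n^{-1/2})$ uniformly in $z$ and this step fails (already at $z=2$ the local bound exceeds the Lipschitz-case constant $2$). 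The repair is exactly the paper's $E_3/E_4$ split: use the Stein equation to rewrite $f'(W)-f'(t)=\bigl(Wf(W)-tf(t)\bigr)+\bigl(h_z(W)-h_z(t)\bigr)$, control the first difference through $\fnorm{f_z}$, $\fnorm{f_z'}$ and $\E[W]$ (or through the monotonicity of $x\mapsto xf_z(x)$ combined with your one-atom-per-mesh observation), and use the concentration bound only for the indicator part.

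A second, quantitative issue: even in the Wasserstein case your normalization does not reach the stated constants. Dividing out $m+j+2$ moves the correction into $R(W)\approx(2-W^2)/\sqrt n$, and with the only estimate you provide, $\E\abs{2-W^2}\le 2+\E W^2\le 3+1/n$, the three contributions add up to roughly $\bigl(4/\pi+2+3\bigr)n^{-1/2}$ plus lower order, a constant strictly larger than the claimed $4+2/\pi$; a sharper non-asymptotic bound on $\E\abs{2-W^2}$ would be needed and is not supplied. The paper's arrangement avoids second moments altogether by keeping the coefficient $\tfrac{n+1}{2}+\tfrac{\sqrt n}{2}W$ attached to the forward difference, so that only $\E[W]$ (bounded via the same discrete identity) enters, and the constants $4+2/\pi$, resp.\ $\tfrac{2\sqrt2+4}{\sqrt\pi}+\tfrac32$, come out exactly. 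As written, your argument yields the correct order $n^{-1/2}$ for the Wasserstein distance but not the stated inequality, and the Kolmogorov estimate has a real gap.
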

 
The rest of the paper is organized as follows. In Section \ref{Stein} we develop Stein's method for the half-normal distribution of $Y$ and review the technique by Goldstein and Reinert \cite{GolRei13} of comparison with a discrete distribution. In Sections \ref{returns}, \ref{max} and \ref{signs} we present the proofs of Theorems \ref{theo2}, 
\ref{theo1} and \ref{theo3}, respectively and in Section \ref{rates} we show the optimality of the obtained convergence rates. Finally, in Section \ref{proofs} we give proofs of some of the results from Section \ref{Stein}. 

\section{Stein's method for the half-normal distribution and for discrete distributions}\label{Stein}
Stein's method is by now a well-established device for proving concrete error bounds in distributional convergence problems. 
Since its introduction by Stein in the seminal paper \cite{St72} on univariate normal approximation for sums of random variables, satisfying a certain mixing condition, it has undergone remarkable progress. On the one hand, the range of normal approximation problems that can be tackled by means of the method has been largely extended, particularly due to the development of certain coupling constructions (see \cite{CGS} for an introduction and overview). On the other hand, the essential idea of characterizing a given distribution by a certain differential or difference equation, was succesfully carried over to other prominent distributions, like, for instance, the Poisson distribution (see e.g. \cite{Ch75} and \cite{BHJ}), the Gamma distribution (see \cite{Luk}), the exponential distribution (see e.g. \cite{CFR11}, \cite{PekRol11} or \cite{FulRos12}), the Beta distribution (see \cite{GolRei13}, \cite{Doe12c} and \cite{Doe14}), the Laplace distribution (see \cite{PiRen12}) and, more generally, the recent article \cite{Gau14} on the class of Variance-Gamma distributions. Furthermore, general techniques have been proposed to develop Stein's method for 
a distribution with a given density, like for example the density approach (see \cite{DHRS}, \cite{ChSh}, \cite{EiLo10} or \cite{CGS}) or the general approach in \cite{Doe14}, which is adapted to a given exchangeable pair. For a nice recent generalization of the density approach also see \cite{LRS14}. \\
Before we develop Stein' s method for the half-normal distribution, let us make the following remark.
Note that since $Y=\abs{Z}$ and the random variable $W$ in each of the Theorems \ref{theo1}-\ref{theo3} is nonnegative, it would in principle be possible to apply Stein's method for the standard normal distribution for their 
proofs. Indeed, if $\mathcal{H}$ is a given class of test functions on $[0,\infty)$ and if for $h\in\mathcal{H}$ we define the function $g$ on $\R$ by $g(x):=h(\abs{x})$ and denote by $\mathcal{G}$ the class of all those functions $g$, when $h$ is running through $\mathcal{H}$, we have that 
\begin{align*}
d_\mathcal{H}(W,Y)&=\sup_{h\in\mathcal{H}}\bigl|E[h(W)]-E[h(Y)]\bigr|
=\sup_{g\in\mathcal{G}}\bigl|E[g(W)]-E[g(Z)]\bigr|\\
&=\sup_{g\in\mathcal{G}}\bigl|E\bigl[\ftilde_g'(W)-W\ftilde_g(W)\bigr]\bigr|\,,
\end{align*}
where we denote by $\ftilde_g$ the standard solution to the standard normal Stein equation corresponding to the test function $g$. Unfortunately, unless Lipschitz-continuous test functions are considered, in general the bounds on the functions $\ftilde_g$, $g\in\mathcal{G}$, and on their lower order derivatives, which could be derived from known results from Stein's method of normal approximation, are worse than the bounds we obtain for the functions $f_h$ in Lemmas \ref{bounds} and \ref{bounds2}.
As a consequence, following this route would yield worse constants in Theorems \ref{theo2}-\ref{theo3} than we will obtain by 
considering Stein' s method for the half-normal distribution itself, at least as far as the Kolmogorov distance is concerned.\\
 However, for  some of our bounds, we actually will use results from Stein's method for the standard normal distribution observing that for $x\geq0$ the solution $f_h(x)$ to the half-normal Stein equation 
\eqref{steineq} given by \eqref{fh1}, \eqref{fh2} and $\ftilde_g(x)$ coincide, which makes it possible to shorten the exposition of some of our proofs. In our opinion, this compromise eventually justifies developing a version of Stein' s method for the half-normal distribution instead of just applying existing results on normal approximation.\\\\
We begin by developing Stein's method for the half-normal distribution. Note that $\mu$ is supported on $[0,\infty)$, since $Y=\abs{Z}$, where $Z\sim N(0,1)$. 
We denote by $p$ and $F$ the (continuous) density function and distribution function of $Y$, respectively. Thus, as a trivial computation shows, we have 
\begin{align}
p(x)&=2\phi(x)1_{(0,\infty)}(x)=\sqrt{\frac{2}{\pi}}e^{-x^2/2}1_{(0,\infty)}(x)\text{  and}\label{pdf}  \\
F(x)&=(2\Phi(x)-1)1_{(0,\infty)}(x)\label{df}\,,
\end{align}
where $\phi,\Phi$ denote the (continuous) density function and distribution function of $Z$, respectively.
According to the density approach in Stein's method, we have the following result.

\begin{proposition}[Stein characterization]\label{sc}
 A random variable $X$ with values in $[0,\infty)$ has the half-normal distribution $\mu$ if and only if 
\begin{equation*}
 E\bigl[f'(X)\bigr]=E\bigl[Xf(X)\bigr]-f(0)\sqrt{\frac{2}{\pi}}
\end{equation*}
for all functions $f:[0,\infty)\rightarrow\R$, which are absolutely continuous on every compact sub-interval of $[0,\infty)$ such that $E\abs{f'(Y)}<\infty$.
\end{proposition}
As we do not explicitly need the result of Proposition \ref{sc}, we omit the rather standard proof.
For a given measurable function $h$ on $[0,\infty)$ with $E\abs{h(Y)}<\infty$ Proposition \ref{sc} now motivates the following \textit{half-normal Stein equation}
\begin{equation}\label{steineq}
 f'(x)-xf(x)=h(x)-\mu(h)\,,
\end{equation}
where we abbreviate $\mu(h):=E[h(Y)]$. This equation is to be solved for the function $f$ on $[0,\infty)$. If $f$ is a solution to \eqref{steineq} and $W$ is a given 
nonnegative random variable, then, taking expectations, we have the following identity:
\begin{equation}\label{id1}
 E[h(W)]-E[h(Y)]=E\bigl[f'(W)-Wf(W)]
\end{equation}
As a matter of fact, the right hand side of \eqref{id1} may often be bounded more easily (even uniformly in the test functions $h$ from some class $\mathcal{H}$ of functions) 
than the left hand side, if one further tool is available. This additional tool may be a coupling, like for example the exchangeable pairs coupling (see \cite{CGS}), 
or a characterization for the distribution $\calL(W)$, as will be exploited in this paper.\\
We now introduce the standard solution $f_h$ to \eqref{steineq}. Let $f_h:[0,\infty)\rightarrow\R$ be defined by
\begin{align}
f_h(x)&:=\frac{1}{p(x)}\int_0^x\bigl(h(t)-\mu(h)\bigr)p(t)dt\notag\\
&=\frac{1}{\phi(x)} \int_0^x\bigl(h(t)-\mu(h)\bigr)\phi(t)dt\label{fh1}\\
&=-\frac{1}{\phi(x)} \int_x^\infty\bigl(h(t)-\mu(h)\bigr)\phi(t)dt\label{fh2}\,.
\end{align}

It is easily checked that $f_h$ indeed solves Equation \eqref{steineq} and that the solutions of the homogeneous equation corresponding to \eqref{steineq} 
have the form $ce^{x^2/2}$ for some constant $c\in\R$. This particularly shows that if $f_h$ is bounded, then it is the only bounded solution to  \eqref{steineq} and 
also the only solution $f$ with $\lim_{x\to\infty}e^{-x^2/2}f(x)=0$.
Note that the half-normal Stein equation \eqref{steineq} is the same as the standard normal one (see e.g. \cite{CGS}) except that we only consider functions on $[0,\infty)$, here. Thus, there should also be some correspondence between the solutions. Indeed, if as above, for given $h$ on $[0,\infty)$, we consider the function $g$ on $\R$ given by 
$g(x):=h(\abs{x})$ and denote by $\ftilde_g$ the standard solution to the Stein equation for the standard normal distribution $\calL(Z)$ and the test function $g$, we obtain for each $x\geq0$ that
\begin{align}\label{coreq1}
\ftilde_g(x)&=-\frac{1}{\phi(x)}\int_x^\infty\bigl(g(t)-E[g(Z)]\bigr)\phi(t)dt\notag\\
&=-\frac{1}{\phi(x)}\int_x^\infty\bigl(h(t)-E[h(Y)]\bigr)\phi(t)dt\notag\\
&=f_h(x)
\end{align}
by \eqref{fh2}. Thus, $\ftilde_g$ coincides with $f_h$ on $[0,\infty)$. This allows us to derive properties of the solutions $f_h$ to \eqref{steineq} from those of the functions $\ftilde_g$, which are well-studied. 
For example note that, if $h$ is Lipschitz on $[0,\infty)$ with Lipschitz constant $L>0$, then for $x,y\in\R$ 
\begin{equation}\label{coreq2}
 \bigl|g(x)-g(y)\bigr|=\bigl|h(\abs{x})-h(\abs{y})\bigr|\leq L\bigl|\abs{x}-\abs{y}\bigr|\leq L\abs{x-y}\,.
\end{equation}
So, $g$ is also Lipschitz with the same constant $L$. \\
In order to make good use of identity \eqref{id1} one needs bounds on the solutions $f_h$ and 
their lower order derivatives. The following lemma gives bounds for bounded measurable or Lipschitz test functions $h$. 
\begin{lemma}\label{bounds}
Let $h:[0,\infty)\rightarrow\R$ be Borel-measurable.
\begin{enumerate}[{\normalfont(i)}]
 \item If $h$ is bounded, then $f_h$ is Lipschitz and with $z_{0.75}:=\Phi^{-1}(3/4)$ we have
\begin{enumerate}[{\normalfont (a)}]
\item $\fnorm{f_h}\leq\frac{\fnorm{h-\mu(h)}}{4\phi(z_{0.75})}\,,$
\item $\fnorm{f_h'}\leq 2\fnorm{h-\mu(h)}\,.$
\end{enumerate}
\item If $h$ is Lipschitz, then $f_h$ is continuously differentiable with a Lipschitz continuous derivative and we have the bounds
\begin{enumerate}[{\normalfont (a)}]
\item $\fnorm{f_h}\leq\fnorm{h'}\,,$
\item $\fnorm{f_h'}\leq\sqrt{\frac{2}{\pi}}\fnorm{h'}\,,$
\item $\fnorm{f_h''}\leq 2\fnorm{h'}\,.$
\end{enumerate}
\end{enumerate}
\end{lemma}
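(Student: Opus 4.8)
The plan is to exploit the correspondence \eqref{coreq1}, namely that $f_h = \ftilde_g$ on $[0,\infty)$ where $g(x) := h(\abs{x})$, and to reduce every bound to the classical estimates for the solution of the standard normal Stein equation. Recall from \eqref{coreq2} that if $h$ is Lipschitz with constant $L$ then $g$ is Lipschitz with the same constant $L$, and if $h$ is bounded then $g$ is bounded with $\fnorm{g - E[g(Z)]} = \fnorm{h - \mu(h)}$, since $E[g(Z)] = E[h(\abs{Z})] = E[h(Y)] = \mu(h)$. Thus each of the six bounds will follow from the corresponding known bound on $\ftilde_g$, $\ftilde_g'$, $\ftilde_g''$ restricted to $[0,\infty)$ — \emph{except} that for the optimal constants one cannot simply quote the standard global bounds (which carry worse constants), so the real work is to re-derive sharp one-sided estimates.

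First I would treat case (i), $h$ bounded. For (i)(b), the bound $\fnorm{f_h'} \leq 2\fnorm{h - \mu(h)}$: from the Stein equation $f_h'(x) = x f_h(x) + h(x) - \mu(h)$, and one has the representation $x f_h(x) = \frac{x}{\phi(x)}\int_x^\infty (\mu(h) - h(t))\phi(t)\,dt$ from \eqref{fh2}; bounding $\abs{h - \mu(h)}$ by $\fnorm{h - \mu(h)}$ and using $\frac{x}{\phi(x)}\int_x^\infty \phi(t)\,dt \le 1$ for $x \ge 0$ (the standard Mills-ratio estimate) gives $\abs{x f_h(x)} \le \fnorm{h - \mu(h)}$, hence $\abs{f_h'(x)} \le 2\fnorm{h - \mu(h)}$. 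For (i)(a), I would differentiate $f_h$ to locate its maximum: from \eqref{fh1}, \eqref{fh2} one sees $f_h$ increases then decreases, with the sign change where $\int_0^x(h(t) - \mu(h))\phi(t)\,dt$ is extremal; the worst case is $h = 1_{[0,z_{0.75}]}$-type, and evaluating $\frac{1}{\phi(x)}\int_0^x (1 - \mu(h))\phi(t)\,dt$ at the optimal threshold produces the constant $\frac{1}{4\phi(z_{0.75})}$. This extremal computation, pinning down exactly which bounded $h$ and which $x$ achieve the supremum, is the most delicate part of (i).

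Next, case (ii), $h$ Lipschitz. By \eqref{coreq2}, $g$ is Lipschitz with $\fnorm{g'} = \fnorm{h'}$ (a.e.), so $\ftilde_g$ is $C^1$ with Lipschitz derivative, and the same holds for $f_h$ on $[0,\infty)$. For (ii)(a), $\fnorm{f_h} \le \fnorm{h'}$: writing $h(t) - \mu(h) = h(t) - E[h(Y)]$ and using $E[h(Y)] = \int_0^\infty h(t) p(t)\,dt$, one has $f_h(x) = \frac{1}{p(x)}\int_0^x \big(h(t) - E[h(Y)]\big) p(t)\,dt$; substituting the mean-value bound $\abs{h(t) - h(s)} \le \fnorm{h'}\abs{t-s}$ and integrating against the probability density $p$ against itself, the double integral collapses to $\fnorm{h'}$ times a quantity bounded by $1$ (this is the standard argument for the normal Stein solution, localized to the half-line). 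For (ii)(b), apply the Stein equation again: $\abs{f_h'(x)} \le \abs{x f_h(x)} + \abs{h(x) - \mu(h)}$; here one needs the sharper bound $\abs{x f_h(x)} \le$ (something) so that the total is $\sqrt{2/\pi}\,\fnorm{h'}$ — the natural route is to note that WLOG $h(0) = \mu(h)$ after subtracting a constant (which changes neither side), then bound directly, and the constant $\sqrt{2/\pi} = p(0)$ emerges. For (ii)(c), $\fnorm{f_h''} \le 2\fnorm{h'}$: differentiate the Stein equation to get $f_h''(x) = f_h(x) + x f_h'(x) + h'(x)$, and bound using (ii)(a), (ii)(b) together with a Mills-ratio-type estimate on $x f_h'(x)$; alternatively, and more cleanly, differentiate the integral representation of $f_h'$ directly.

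I expect the main obstacle to be obtaining the \emph{optimal} constants rather than merely finite ones — in particular, the extremal analysis behind (i)(a) (identifying $z_{0.75} = \Phi^{-1}(3/4)$ as the optimal threshold, which requires solving $\frac{d}{dx}\big[\frac{1}{\phi(x)}\int_0^x \phi\big]$-type stationarity conditions) and the tight constant $\sqrt{2/\pi}$ in (ii)(b), which forces one to track cancellations carefully rather than crudely triangle-inequality the Stein equation. The bounds (i)(b), (ii)(c) at constant $2$ should be routine via Mills' inequality for the half-line, and (ii)(a) is the classical argument transplanted verbatim. Throughout, one must be slightly careful at the boundary point $x = 0$, where $f_h(0) = 0$ but $f_h'(0) = h(0) - \mu(h)$ need not vanish, so the "derivative" bounds are really one-sided derivatives there; since we only ever integrate these solutions against distributions on $[0,\infty)$ this causes no difficulty.
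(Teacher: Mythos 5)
Your reduction via $g(x)=h(\abs{x})$ and your argument for (i)(b) are fine and essentially the paper's: $\abs{xf_h(x)}\le\fnorm{\htilde}\,x\bigl(1-\Phi(x)\bigr)/\phi(x)\le\fnorm{\htilde}$ by Mills' ratio \eqref{Mill}, then the Stein equation. But at every point where the sharp constant is the actual content, your sketch either stops short or indicates a route that does not close. In (ii)(b) the step ``WLOG $h(0)=\mu(h)$ after subtracting a constant'' is vacuous: replacing $h$ by $h+c$ shifts $\mu(h)$ by the same $c$, so $\htilde$, $f_h$ and $h(0)-\mu(h)$ are all invariant, and nothing can be arranged this way; no mechanism producing $\sqrt{2/\pi}$ is given. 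The paper obtains it from the representation \eqref{beq12} of $f_h'$ purely in terms of $h'$, the sign facts \eqref{Minc}--\eqref{Ndec}, the closed forms $H(x)=\sqrt{2/\pi}+\int_0^xF(s)ds$ and $G(x)=\int_x^\infty(1-F(s))ds$, and an explicit maximization of the resulting function $S$, whose maximum $\sqrt{2/\pi}$ sits at $x=0$. In (ii)(a), your ``double integral collapses to a quantity bounded by $1$'' fails as stated: bounding $\abs{h(t)-\mu(h)}\le\fnorm{h'}E\abs{t-Y}$ inside \eqref{fh1} gives $\frac{1}{p(x)}\int_0^x E\abs{t-Y}\,p(t)\,dt$, which diverges as $x\to\infty$; and the ``classical argument transplanted verbatim'' yields the literature constant $2$, not $1$ (the paper explicitly flags this discrepancy). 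The constant $1$ comes from the signed Fubini representation \eqref{beq7} combined with \eqref{beq8}--\eqref{beq9}, giving exactly $\abs{f_h(x)}\le\fnorm{h'}\bigl(1-\sqrt{2/\pi}\,(1-\Phi(x))/\phi(x)\bigr)\le\fnorm{h'}$.

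For (ii)(c), the assembly you describe cannot reach the constant $2$: in $f_h''=f_h+xf_h'+h'$ the terms $\abs{f_h}$ and $\abs{h'}$ already use up the full budget $2\fnorm{h'}$, and any nonzero uniform (Mills-type) bound on $\abs{xf_h'}$ pushes a triangle-inequality estimate strictly above $2\fnorm{h'}$; the factor $2$ requires the pointwise cancellations organized in \eqref{beq19}--\eqref{beq22}, where $f_h''$ is written with the nonpositive coefficients $U,V$ and the integrals $\int_0^xF$, $\int_x^\infty(1-F)$ are evaluated in closed form before Mills' ratio is applied once at the end. For (i)(a), exhibiting the indicator-type candidate at threshold $z_{0.75}$ only shows the constant cannot be improved; the upper bound for \emph{all} bounded $h$ is the pointwise estimate $\abs{f_h(x)}\le\fnorm{\htilde}\min\bigl(F(x),1-F(x)\bigr)/p(x)$ from the two representations \eqref{fh1}, \eqref{fh2}, together with the monotonicity of $F/p$ (increasing) and $(1-F)/p$ (decreasing), a consequence of $\psi$ being decreasing, so that the minimum peaks at the median of $F$, namely $z_{0.75}$, with value $1/(4\phi(z_{0.75}))$; note also that $f_h$ itself need not ``increase then decrease'' for general bounded $h$ --- it is these two ratio functions that are monotone. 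Finally, it is worth noting that (i)(b), (ii)(b) and (ii)(c) are precisely the bounds that \emph{can} be imported unchanged from the standard normal literature through your correspondence $f_h=\ftilde_g$, as the paper remarks; the genuinely new work is (i)(a) and the constant $1$ in (ii)(a), and for those your proposal supplies the right answer but not the argument.
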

\begin{proof}
Assertion (a) of (i) is proved in Section \ref{proofs}. By \eqref{coreq1} $f_h(x)$ coincides with the solution $\ftilde_g(x)$ to the Stein equation for the standard normal distribution corresponding to the test function $g(x)=h(\abs{x})$. 
Since 
\[\sup_{x\in\R}\abs{g(x)-E[g(Z)]}=\sup_{x\geq0}\abs{h(x)-E[h(Y)]}\,,\quad E[g(Z)]=E[h(Y)]\quad\text{and,}\] 
by \eqref{coreq2}, $\fnorm{g'}=\fnorm{h'}$ bound (b) of (i) and  
the bounds in (ii) follow from well-known bounds in the standard normal case (see \cite{CGS})).\\ 
\end{proof}

Lipschitz test functions $h$ with Lipschitz constant $\fnorm{h'}\leq1$ yield the Wasserstein distance. The Kolmogorov distance, in contrast, is induced by the class of functions $h_z=1_{(-\infty,z]}$, $z\in\R$. Since we only compare distributions with support on $[0,\infty)$ we may restrict ourselves to the case $z\geq0$. 
We write $f_z:=f_{h_z}$. Since the functions $h_z$ are uniformly bounded by $1$, we immediately get bounds on $\fnorm{f_z}$ and $\fnorm{f_z'}$ from 
Lemma \ref{bounds} (i). But for this particular class of functions, a special analysis indeed yields smaller constants. If we denote by $\ftilde_z$ the standard solution to Stein's equation for the standard normal distribution with respect to the test function $h_z$, where $z\in\R$, we obtain from \eqref{coreq1} for $z\geq0$ that 
$f_z(x)=\ftilde_z(x)-\ftilde_{-z}(x)$ for each $x\geq0$. Indeed, letting $g_z(x):=h_z(\abs{x})$, $x\in\R$, we have $g_z(x)=h_z(x)-1_{(-\infty,-z)}(x)$ which equals $h_z(x)-h_{-z}(x)$ for every $x\not=-z$, leading to this representation of $f_z$. 
However, we were not able to use this representation and known properties of the functions $\ftilde_z$ to derive the properties of $f_z$ stated in the following lemma. This is why a complete proof is given in Section \ref{proofs}.
\begin{lemma}\label{bounds2}
For each $x,y,z\geq0$ we have:
\begin{enumerate}[{\normalfont (a)}]
\item The function $[0,\infty)\ni x\mapsto xf_z(x)\in\R$ is increasing and\\ $0\leq xf_z(x)\leq2\Phi(z)-1<1$.
\item $0< f_z(x)\leq\frac{1}{2}\,.$\label{fzbound}
\item $\fnorm{f_z'}\leq1$ and $\abs{f_z'(x)-f_z'(y)}\leq1\,.$
\end{enumerate}
\end{lemma}
 
\begin{remark}\label{fzrem}
The bound in Lemma \ref{bounds2} (b) is not optimal. More precisely, computer algebra systems suggest that 
\[\sup_{z\geq0}\fnorm{f_z}=0.456296...\,.\]
However, the bounds in Lemma \ref{bounds2} (c) are optimal, which is proved in Section \ref{proofs}.
\end{remark}

In the following, we review the technique of finding a suitable Stein type characterization for a discrete distribution on the integers by Goldstein and Reinert 
\cite{GolRei13}. 
A finite integer interval is a set $I$ of the form $I=[a,b]\cap\Z$ for some integers $a\leq b$. Given a probability mass function 
$p:\Z\rightarrow\R$ with $p(k)>0$ for $k\in I$ and $p(k)=0$ for $k\in\Z\setminus I$, we consider the function 
$\psi:I\rightarrow\R$ given by the formula
\begin{equation}\label{formelpsi}
\psi(k):=\frac{\Delta p(k)}{p(k)}=\frac{p(k+1)-p(k)}{p(k)},
\end{equation} 
where for a function $f$ on the integers $\Delta f(k):=f(k+1)-f(k)$ denotes the \textit{forward difference operator}. 
The next result, a version of Corollary 2.1 from \cite{GolRei13}, yields various Stein characterizations for the distribution corresponding to 
$p$. For such a probability mass function $p$ with support a finite integer interval $I=[a,b]\cap\Z$, let $\F(p)$ denote the class of all real-valued functions $f$ on $\Z$ such that $f(a-1)=0$.

\begin{proposition}\label{golreingen}
Let $p$ be a probability mass function which is supported on the finite integer interval 
$I=[a,b]\cap\Z$ and is positive there. Let $c:[a-1,b]\cap\Z\rightarrow\R$ be a function with $c(k)\not=0$ for all $k\in I$. Then, in order that a given random variable $X$ with support $I$ is distributed according to $p$ it is necessary and sufficient that for all functions $g\in\F(p)$ we have
\begin{equation}\label{steinidgen}
E\Bigl[c(X-1)\Delta g(X-1)+\bigl[c(X)\psi(X)+\Delta c(X-1)\bigr]g(X)\Bigr]=0\,.
\end{equation}
\end{proposition}

\section{The number of returns to the origin}\label{returns}

Recall the definition of $K_n$ from \eqref{Kn}. In this section we give the proof of Theorem \ref{theo2}. It is known (see e.g. \cite{Fel1}, Problem 9 in Chapter 3) that 
for each $r\in\{0,1,\dotsc,m\}$
\begin{equation}\label{pKn}
 p(r):=P(K_n=r)=\frac{1}{2^{n-r}}\binom{n-r}{n/2}=\frac{1}{2^{2m-r}}\binom{2m-r}{m}\,.
\end{equation}
Using \eqref{pKn} as well as the relation 
\begin{equation*}
 \binom{n+1}{k}=\frac{n+1}{n-k+1}\binom{n}{k}
\end{equation*}
we obtain that 
\begin{align}\label{psiKn}
 \psi(r)&:=\frac{p(r+1)-p(r)}{p(r)}=\frac{2^{-(2m-r-1)}\binom{2m-r-1}{m}-2^{-(2m-r)}\binom{2m-r}{m}}{2^{-(2m-r)}\binom{2m-r}{m}}\notag\\
&=2\frac{m-r}{2m-r}-1=\frac{-r}{2m-r}
\end{align}
for all $r\in\{0,1,\dotsc,m\}$. We thus define $c(r):=2m-r$ for $r=-1,0,\dotsc,m$ and obtain 
\begin{itemize}
 \item $\Delta c(r-1)=c(r)-c(r-1)=2m-r-(2m-r+1)=-1$
 \item $c(r)\psi(r)=-r$ and 
 \item $\gamma(r):=c(r)\psi(r)+\Delta c(r-1)=-r-1=-(r+1)$\,.
\end{itemize}
Proposition \ref{golreingen} thus yields the following characterization of the distribution of $K_n$.
\begin{lemma}\label{Knle}
 A random variable $X$ with support $[0,m]\cap\Z$ has probability mass function $p$ if and only if for every function $g\in\F(p)$ 
\[E\bigl[(2m-X+1)\Delta g(X-1)-(X+1)g(X)\bigr]=0\,.\]
\end{lemma}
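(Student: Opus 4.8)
The plan is to apply Proposition \ref{golreingen} directly with the specific data $p$, $\psi$ and $c$ that were just computed, so essentially no new work is needed beyond substitution and a verification that the hypotheses of the proposition are met. First I would record the setup: the probability mass function $p$ from \eqref{pKn} is supported on the finite integer interval $I=[0,m]\cap\Z$ and is strictly positive there, since $\binom{2m-r}{m}>0$ for $0\le r\le m$; thus $a=0$ and $b=m$. The function $\psi:I\to\R$ attached to $p$ via \eqref{formelpsi} is exactly $\psi(r)=-r/(2m-r)$, as computed in \eqref{psiKn} (one should note in passing that this formula also makes sense and equals $0$ at $r=0$, and that at $r=m$ the value $\psi(m)=-m/m=-1$ is finite, consistent with $p(m+1)=0$). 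The candidate coefficient function is $c(r):=2m-r$ on $[a-1,b]\cap\Z=[-1,m]\cap\Z$; it satisfies $c(r)=2m-r\neq 0$ for all $r\in I=[0,m]\cap\Z$ because then $m\le 2m-r\le 2m$ and $m\ge 1$, so the nonvanishing hypothesis on $c$ holds.

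Next I would simply insert these into the identity \eqref{steinidgen}. By the three bullet-point computations preceding the lemma we have $c(X-1)=2m-(X-1)=2m-X+1$, and $\gamma(X):=c(X)\psi(X)+\Delta c(X-1)=-(X+1)$. Substituting into
\begin{equation*}
E\Bigl[c(X-1)\Delta g(X-1)+\bigl[c(X)\psi(X)+\Delta c(X-1)\bigr]g(X)\Bigr]=0
\end{equation*}
gives precisely
\begin{equation*}
E\bigl[(2m-X+1)\Delta g(X-1)-(X+1)g(X)\bigr]=0,
\end{equation*}
which is the asserted characterization, valid for all $g\in\F(p)$, i.e. all real-valued $g$ on $\Z$ with $g(a-1)=g(-1)=0$. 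The "if and only if" is inherited verbatim from the necessity-and-sufficiency statement of Proposition \ref{golreingen}.

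There is no real obstacle here; the only things that genuinely need checking are the side conditions of Proposition \ref{golreingen} — positivity of $p$ on $I$, and $c(k)\neq 0$ on $I$ — together with the bookkeeping identities $\Delta c(r-1)=-1$ and $c(r)\psi(r)=-r$, all of which are the short computations already displayed in the text just above Lemma \ref{Knle}. The mildest point worth a sentence of care is the behaviour of $\psi$ at the endpoint $r=0$ (where the numerator $-r$ vanishes) and at $r=m$ (where $c$ is still nonzero and $\psi$ is finite), so that the formula $\psi(r)=-r/(2m-r)$ is unambiguous throughout $I$; but this causes no difficulty. I would therefore present the proof as: "This is immediate from Proposition \ref{golreingen} applied to $p$ as in \eqref{pKn} and $c(r):=2m-r$, using \eqref{psiKn} and the three displayed identities for $c$," possibly with one added remark confirming that $p>0$ on $I$ and $c\neq 0$ on $I$.
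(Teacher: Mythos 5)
Your proposal is correct and matches the paper's own (essentially implicit) argument: the lemma is obtained by plugging the computations $\psi(r)=-r/(2m-r)$, $c(r)=2m-r$, $\Delta c(r-1)=-1$ and $\gamma(r)=-(r+1)$ directly into Proposition \ref{golreingen}. Your extra checks that $p>0$ on $[0,m]\cap\Z$ and $c\neq 0$ there are exactly the side conditions the paper leaves tacit, so nothing further is needed.
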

 
Letting $g(k):=1$ for $k\geq0$ and $g(k):=0$ for $k<0$ we obtain that 
\begin{align}\label{EKn}
 E[K_n]&=(2m+1)P(K_n=0)-1\notag\\
&=(2m+1)2^{-2m}\binom{2m}{m}-1\leq 2m 2^{-2m}\binom{2m}{m}\notag\\
&\leq\frac{2m}{\sqrt{\pi m}}=\sqrt{\frac{2}{\pi}}\sqrt{n}\,,
\end{align}
where the last inequality follows from Stirling's formula.

\begin{remark}
It is not easy to obtain the above formula for $E[K_n]$ directly from the definition of the expected value, but some combinatorial tricks are needed. Also note that bound \eqref{EKn} on $E[K_n]$ is quite accurate.
Furthermore, since $K_n=\sum_{j=1}^m 1_{A_j}$, where $A_j:=\{S_{2j}=0\}$, we have $E[K_n]=\sum_{j=1}^m2^{-2j}\binom{2j}{j}$ and this is the partial sum of the diverging series 
which is often used to prove recurrence of the simple symmetric random walk. Thus, we see how the Stein characterization in Lemma \ref{Knle} gives us information about how fast this series diverges. This indicates that a lot of information might be encoded in such a Stein identity. 
\end{remark}

\begin{proof}[Proof of Theorem \ref{theo2}]
Let $h$ be a Borel-measurable test function and consider the corresponding standard solution $f:=f_h$ to Stein's equation \eqref{steineq} given by \eqref{fh1} and \eqref{fh2}. We also let $f_h(x):=0$ for each $x<0$ and define $g(k):=f_h(k/\sqrt{n})$ for $k\in\Z$. Writing $\Delta_y f(x):=f(x+y)-f(x)$ we obtain from Lemma \ref{Knle} 
\begin{align}\label{Kneq1}
 E\bigl[Wf(W)\bigr]&=\frac{1}{\sqrt{n}}E\bigl[K_n g(K_n)\bigr]
=\frac{1}{\sqrt{n}} E\bigl[(n-K_n+1)\Delta g(K_n-1)-g(K_n)\bigr]\notag\\
&=E\Bigl[\bigl(\sqrt{n}-W+n^{-1/2}\bigr)\Delta_{n^{-1/2}} f\bigl(W-n^{-1/2}\bigr)\Bigr]-\frac{1}{\sqrt{n}}E\bigl[f(W)\bigr]\notag\\
&=\sqrt{n}E\bigl[\Delta_{n^{-1/2}} f\bigl(W-n^{-1/2}\bigr)\bigr]+E_1\,,
\end{align}
where 
\begin{align}\label{Kneq2}
 \abs{E_1}&=\Bigl|E\Bigl[\bigl(n^{-1/2}-W\bigr)\Delta_{n^{-1/2}} f\bigl(W-n^{-1/2}\bigr)\Bigr]-\frac{1}{\sqrt{n}}E\bigl[f(W)\bigr]\Bigr|\notag\\
&\leq \fnorm{f'}\left(\frac{1}{n}+\frac{1}{\sqrt{n}}E[W]\right)+\frac{\fnorm{f}}{\sqrt{n}}\notag\\
&\leq\frac{\fnorm{f'}}{\sqrt{n}}\left(\frac{1}{\sqrt{n}}+\sqrt{\frac{2}{\pi}}\right) +\frac{\fnorm{f}}{\sqrt{n}} 
\end{align}
by inequality \eqref{EKn} and because $W=n^{-1/2}K_n$.
Now, using \eqref{Kneq1} and the fact that $f=f_h$ is a solution to the half-normal Stein equation \eqref{steineq} we obtain
\begin{align}\label{Kneq3}
\bigl|E[h(W)]-E[h(Y)]\bigr|&=\bigl|E\bigl[f'(W)-Wf(W)\bigr]\bigr|\notag\\
&\leq \bigl|E\bigl[f'(W)-\sqrt{n}\Delta_{n^{-1/2}} f\bigl(W-n^{-1/2}\bigr)\bigr]\bigr|+\abs{E_1}\notag\\
&=\sqrt{n}\Bigl|E\Bigl[\int_{W-n^{-1/2}}^W\bigl(f'(W)-f'(t)\bigr)dt\Bigr]\Bigr|+\abs{E_1}\notag\\
&=\abs{E_2}+\abs{E_1}\,,
\end{align}
where, 
\begin{equation}\label{Kneq4}
E_2:=\sqrt{n}E\Bigl[\int_{W-n^{-1/2}}^W\bigl(f'(W)-f'(t)\bigr)dt\Bigr]\,.
\end{equation}
If $h$ is Lipschitz, then we know from Lemma \ref{bounds} (ii) that $f'$ is also Lipschitz with Lipschitz constant $2\fnorm{h'}$ and we can bound 
\begin{align}\label{Kneq5}
 \abs{E_2}&\leq\sqrt{n}\fnorm{f''}E\Bigl[\int_{W-n^{-1/2}}^W\bigl(W-t\bigr)dt\Bigr]=\sqrt{n}\fnorm{f''}\int_{0}^{n^{-1/2}}u du\notag\\
&=\sqrt{n}\fnorm{f''}\frac{1}{2n}=\frac{\fnorm{f''}}{2\sqrt{n}}\leq\frac{\fnorm{h'}}{\sqrt{n}}\,.
\end{align}
More generally, if $h$ is measurable and $E\abs{h(Y)}<\infty$, then, again using that $f=f_h$ is a solution to \eqref{steineq} we conclude 
\begin{align}\label{Kneq6}
E_2&=\sqrt{n}E\Bigl[\int_{W-n^{-1/2}}^W\bigl(Wf(W)-tf(t)+h(W)-h(t)\bigr)dt\Bigr]\notag\\
&= \sqrt{n} E\Bigl[\int_{W-n^{-1/2}}^W\bigl(Wf(W)-tf(t)\bigr)dt\Bigr] +\sqrt{n}E\Bigl[\int_{W-n^{-1/2}}^W\bigl(h(W)-h(t)\bigr)dt\Bigr]\notag\\
&=:E_3+E_4\,.
\end{align}
By the fundamental theorem of calculus we have 
\begin{align}\label{Kneq14}
 \abs{E_3}&=\sqrt{n}\Bigl|E\Bigl[\int_{W-n^{-1/2}}^W\int_t^W(f(s)+sf'(s))ds\,dt\Bigr]\Bigr|\notag\\
&\leq\sqrt{n}\fnorm{f}E\Bigl[\int_{W-n^{-1/2}}^W\bigl(W-t\bigr)dt\Bigr]+\sqrt{n}\fnorm{f'}E\Bigl[\int_{W-n^{-1/2}}^W\int_t^W\abs{s}ds\,dt\Bigr]\notag\\
&\leq\sqrt{n}\fnorm{f}\frac{1}{2n}+\sqrt{n}\fnorm{f'}E\Bigl[\max(n^{-1/2}, W)\int_{W-n^{-1/2}}^W\bigl(W-t\bigr)dt\Bigr]\notag\\
&=\frac{\fnorm{f}}{2\sqrt{n}}+\frac{\fnorm{f'}}{2\sqrt{n}}E\bigl[\max(n^{-1/2}, W)\bigr]\notag\\
&\leq\frac{\fnorm{f}}{2\sqrt{n}}+\frac{\fnorm{f'}}{2n}+\frac{\fnorm{f'}}{2\sqrt{n}}\sqrt{\frac{2}{\pi}}\,,
\end{align}
where we have used the inequality $\max(x,y)\leq x+y$ valid for $x,y\geq0$ and \eqref{EKn} for the last step.
Note that for Lipschitz $h$ we have $|h(W)-h(t)|\leq\fnorm{h'}|W-t|$ and hence 
\[\abs{E_4}\leq \sqrt{n}\fnorm{h'}E\Bigl[\int_{W-n^{-1/2}}^W\bigl(W-t\bigr)dt\Bigr]=\frac{\fnorm{h'}}{2\sqrt{n}}\,,\]
which together with \eqref{Kneq14} yields a worse bound than the one obtained in \eqref{Kneq5}, if we plug in the bounds on $f$ and $f'$ 
from Lemma \ref{bounds} (ii). 
If $h$ is not Lipschitz, then $\abs{E_4}$ cannot be bounded that easily. Having in mind the Kolmogorov distance, we restrict ourselves to the test functions 
$h_z=1_{(-\infty,z]}$, $z>0$. Note that for $h=h_z$ we can write 
\begin{equation*}
 \abs{E_4}=-E_4=\sqrt{n}E\Bigl[\int_\R 1_{\{W-n^{-1/2}\leq t\leq W\}}\bigl(h(t)-h(W)\bigr)dt\Bigr] 
\end{equation*}
and that 
\begin{align}\label{Kneq8}
& 1_{\{W-n^{-1/2}\leq t\leq W\}}\bigl(h(t)-h(W)\bigr)=1_{\{W-n^{-1/2}\leq t\leq W\}} 1_{\{W-n^{-1/2}\leq t\leq z< W\}}\notag\\
&=1_{\{W-n^{-1/2}\leq t\leq z\}} 1_{\{W-n^{-1/2}\leq z< W\}}\notag\\
&\leq 1_{\{W-n^{-1/2}\leq t\leq W\}} 1_{\{W-n^{-1/2}\leq z< W\}}\,.
\end{align}
Thus, using \eqref{Kneq8} we obtain
\begin{align}\label{Kneq9}
 \abs{E_4}&=\sqrt{n}E\Bigl[\int_\R 1_{\{W-n^{-1/2}\leq t\leq z\}} 1_{\{W-n^{-1/2}\leq z< W\}}dt\Bigr]\notag\\
&\leq \sqrt{n}E\Bigl[\int_{W-n^{-1/2}}^W 1 dt 1_{\{W-n^{-1/2}\leq z< W\}}\Bigr]\notag\\
&=P\bigl(W-n^{-1/2}\leq z< W\bigr)=P\bigl(K_n-1\leq\sqrt{n}z<K_n\bigr)\notag\\
&=P\bigl(K_n=1+\lfloor \sqrt{n}z\rfloor\bigr)\,.
\end{align}
Now note that by unimodality of binomial coefficients for each $r=0,1,\dotsc,m$ we can bound 
\begin{align}\label{Kneq10}
P(K_n=r)&=\frac{1}{2^{2m-r}}\binom{2m-r}{m}\leq\frac{1}{2^{2m-r}}
\begin{cases}
\binom{2m-r}{m-r/2},& r \text{  even}\\
\binom{2m-r}{m-(r+1)/2},& r\text{  odd}        
\end{cases}\notag\\
&\leq
\begin{cases}
 (\pi(m-r/2))^{-1/2},& r\text{  even}\\
 (\pi(m-(r+1)/2))^{-1/2},& r\text{  odd} 
\end{cases}\notag\\
&\leq\frac{\sqrt{2}}{\sqrt{\pi m}}\,,
\end{align}
where the next to last inequality comes from 
\[2^{-2k-1}\binom{2k+1}{k}\leq 2^{-2k}\binom{2k}{k}\leq\frac{1}{\sqrt{\pi k}}\]
by Stirling's formula.
Note that $P(K_n=r)=0$ for $r>m=n/2$ and, hence, $P(K_n=1+\lfloor \sqrt{n}z\rfloor)=0$ unless $z\leq 2^{-1}n^{1/2}-n^{-1/2}$. 
Thus, from \eqref{Kneq10} we have that for each $z>0$ 
\begin{equation}\label{Kneq11}
\abs{E_4}\leq\frac{\sqrt{2}}{\sqrt{\pi m}}=\frac{2}{\sqrt{\pi n}}\,.
\end{equation}
Collecting terms we see from \eqref{Kneq3}, \eqref{Kneq2}, \eqref{Kneq5} and Lemma \ref{bounds} (ii) that for $h$ Lipschitz on $[0,\sqrt{n}]$ we have  
\begin{align}\label{Kneq12}
 \bigl|E[h(W)]-E[h(Y)]\bigr|\leq\frac{\fnorm{h'}}{\sqrt{n}}\left(\frac{1}{\sqrt{n}}\sqrt{\frac{2}{\pi}}+\frac{2}{\pi}+2\right)
\end{align}
 and letting $h=h_z$ for $z\geq0$ we see from \eqref{Kneq3}, \eqref{Kneq2}, \eqref{Kneq6}, \eqref{Kneq14}, \eqref{Kneq11} and Lemma \ref{bounds2} that 
\begin{align}\label{Kneq13}
 \bigl|P(W\leq z)-P(Y\leq z)\bigr|\leq\frac{1}{\sqrt{n}}\left(\frac{3+2\sqrt{2}}{\sqrt{2\pi}}+\frac{3}{4}\right)+\frac{3}{2n}\,.
\end{align}
Theorem \ref{theo2} now follows from \eqref{Kneq12} and \eqref{Kneq13}.\\
\end{proof}

\section{The maximum of the simple random walk}\label{max}
Recall the definition of $M_n$ from \eqref{Mn}, where $n=2m$ is an even positive integer. In this section we give a proof of Theorem \ref{theo1}, which is a little bit more 
complicated than the one of Theorem \ref{theo2} given in the last section. This is so because in this case the function $\psi$ from \eqref{formelpsi} vanishes on the odd 
integers which is inconvenient for our method of proof. So we will introduce an auxiliary variable $V$ and use the triangle inequality 
\begin{equation}\label{triangle}
 d_\mathcal{H}(W,Y)\leq d_\mathcal{H}(W,V)+d_\mathcal{H}(V,Y)
\end{equation}
to prove the theorem.\\
For $r=0,1,\dotsc,n=2m$ let $p(r):=P(M_n=r)$. Then, it is known (see \cite{Fel1}, Theorem 1 in Section 8 of Chapter 3) that 
\begin{equation}\label{Mneq1}
 p(r)=P(M_n=r)=p_{n,r}+p_{n,r+1}\,,
\end{equation}
where 
\begin{equation*}
p_{n,k}:=P(S_n=k)=\binom{n}{\frac{n+k}{2}}2^{-n} 
\end{equation*}
is the probability that the random walk is in position $k$ at time $n$. We use the convention that $\binom{y}{x}=0$ unless $x$ is a nonnegative integer. Hence, if 
$p_{n,k}\not=0$, then $n$ and $k$ must have the same parity and since we always assume that $n$ is even, we have $p_{n,k}=0$ whenever $k$ is odd.

\begin{remark}\label{remclt}
From \eqref{Mneq1} one can easily get that for $z\geq0$
\begin{align}\label{clt1}
\Bigl|P(M_n\leq\sqrt{n}z)-(2\Phi(z)-1)\Bigr|&=\Bigl|2\bigl(P(S_n\leq z)-\Phi(z)\bigr)+P(S_n=1+\lfloor\sqrt{n}z\rfloor)\Bigr|\,, 
\end{align}
which, together with the Berry-Esseen Theorem for Bernoulli random variables, yields the second part of Theorem \ref{theo1}, maybe with different constants. Using the fact that 
\begin{equation*}
 d_{\W}(\mu,\nu)=\int_\R\bigl|F(x)-G(x)\bigr|dx\,,
\end{equation*}
where $F$ and $G$ are the distribution functions corresponding to the distributions $\mu$ and $\nu$, respectively, one might also be able to derive a bound comparable to the first bound of Theorem \ref{theo1} from \eqref{clt1} 
and a quantitative version of the mean CLT, see \cite{Gol10}, for instance. However, we prefer giving a full proof of Theorem \ref{theo1} which does not rely on any external quantitative CLT results but is just based on the Stein characterizations of the two distributions involved. 
\end{remark}

From \eqref{Mneq1} 
it easily follows that with $\psi$ given by \eqref{formelpsi} for all $r\in\{0,1,\dotsc,n\}$ 
\begin{align}\label{Mneq2}
 \psi(r)&:= \frac{p_{n,r+2}-p_{n,r}}{p_{n,r}+p_{n,r+1}}=
\begin{cases}
 \frac{p_{n,r+2}-p_{n,r}}{p_{n,r}},& r\text{  even}\\
0,& r\text{  odd}\,.
\end{cases}
\end{align}
This is why we introduce the auxiliary random variables 
\begin{equation}\label{defNn}
N_n:=\left\lfloor\frac{M_n+1}{2}\right\rfloor\quad\text{and}\quad V:=V_n:=\frac{2N_n}{\sqrt{n}}\,.
\end{equation}
Then, $N_n$ only takes the values $0,1,\dotsc,m$ and for $s\in\{0,1,\dotsc,m\}$ we obtain from \eqref{Mneq1} that 
\begin{align}\label{Mneq3}
 q(s)&:=P(N_n=s)=P(M_n+1=2s)+P(M_n+1=2s+1)\notag\\
&=P(M_n=2s-1)+P(M_n=2s)=2P(M_n=2s)=2p(2s)\notag\\
&=2\binom{2m}{m+s}2^{-2m}\,.
\end{align}
Denoting by $\rho$ the corresponding difference quotient of the probability mass function $q$ we obtain for $s=0,1,\dotsc,m$ that 
\begin{align}\label{Mneq4}
 \rho(s)&:=\frac{q(s+1)-q(s)}{q(s)}=\frac{2p(2s+2)-2p(2s)}{2p(2s)}\notag\\
&=\frac{p(2s+2)-p(2s)}{p(2s)}=\frac{\binom{2m}{m+s+1}-\binom{2m}{m+s}}{\binom{2m}{m+s}}\notag\\
&=\frac{\binom{2m}{m+s}\bigl(\frac{m-s}{m+s+1}-1\bigr)}{\binom{2m}{m+s}}=\frac{m-s}{m+s+1}-\frac{m+s+1}{m+s+1}\notag\\
&=-\frac{2s+1}{m+s+1}\,.
\end{align}
So, we define the function $c:[-1,m]\rightarrow\R$ by $c(s):=m+s+1$. Then, for $s=0,1,\dotsc,m$
\begin{align*}
c(s)\rho(s)&=-2s-1=-(2s+1)\\
\Delta c(s-1)&=c(s)-c(s-1)=m+s+1-(m+s)=1\\
\gamma(s)&:=c(s)\rho(s)+\Delta c(s-1)=-2s-1+1=-2s\,. 
\end{align*}
Hence, Proposition \ref{golreingen} implies the following lemma.
\begin{lemma}\label{Mnlemma}
 A random variable $X$ with values in $I:=[0,m]\cap\Z$ has the probability mass function $q$, if and only if for all functions $g\in\F(q)$ 
\[E\bigl[(m+X)\Delta g(X-1)-2Xg(X)\bigr]=0\,.\]
\end{lemma}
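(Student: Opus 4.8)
The plan is to invoke Proposition~\ref{golreingen} with the data computed just above the statement, namely the probability mass function $q$ from \eqref{Mneq3}, its difference quotient $\rho$ from \eqref{Mneq4}, and the auxiliary function $c(s):=m+s+1$ on $[-1,m]\cap\Z$. First I would check that all hypotheses of Proposition~\ref{golreingen} are met: $N_n$ is $\Z$-valued with support the finite integer interval $I=[0,m]\cap\Z$, and $q(s)=2\binom{2m}{m+s}2^{-2m}>0$ for every $s\in I$ since $0\le m+s\le 2m$; moreover $c(s)=m+s+1\neq0$ for all $s\in\{-1,0,\dots,m\}$ (indeed $c(s)\ge m\ge1$ when $m\ge1$, and one checks the degenerate case $m=0$ separately or notes $n=2m$ is a positive even integer so $m\ge1$). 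Thus $c(k)\neq0$ on $I$ as required.

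Next I would simply substitute into the characterizing identity \eqref{steinidgen},
\begin{equation*}
E\Bigl[c(X-1)\Delta g(X-1)+\bigl(c(X)\rho(X)+\Delta c(X-1)\bigr)g(X)\Bigr]=0\,,
\end{equation*}
valid for all $g\in\F(q)$ (note $\F(q)$ is the class of real functions on $\Z$ vanishing at $a-1=-1$). Using the three bullet-pointed computations immediately preceding the lemma, $c(X-1)=m+X$, and $\gamma(X):=c(X)\rho(X)+\Delta c(X-1)=-2X$, so the identity becomes exactly
\begin{equation*}
E\bigl[(m+X)\Delta g(X-1)-2Xg(X)\bigr]=0\,,
\end{equation*}
which is the assertion of Lemma~\ref{Mnlemma}. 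Proposition~\ref{golreingen} gives this as a necessary and sufficient condition for $X$ (with support $I$) to have mass function $q$, so both directions of the "if and only if" follow at once.

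There is essentially no analytic obstacle here; the lemma is a direct specialization of Proposition~\ref{golreingen}. The only points demanding a little care are bookkeeping ones: confirming the support is exactly $[0,m]\cap\Z$ and the positivity of $q$ there, confirming $c$ is nonvanishing on $[-1,m]\cap\Z$, and matching the role of "$p$" in Proposition~\ref{golreingen} (a generic mass function) with the concrete "$q$" here, so that $\psi$ in \eqref{formelpsi} is our $\rho$ in \eqref{Mneq4} and $\F(p)$ is $\F(q)$. The verification of $\Delta c(s-1)=1$ and $c(s)\rho(s)=-(2s+1)$ has already been carried out in the display preceding the lemma, so the proof reduces to quoting Proposition~\ref{golreingen} and plugging in. If anything could be called the "main step," it is merely making explicit that the hypotheses of Proposition~\ref{golreingen} are satisfied by $(q,\rho,c)$.
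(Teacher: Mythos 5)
Your proposal is correct and follows exactly the paper's route: the lemma is the direct specialization of Proposition~\ref{golreingen} to the mass function $q$ with $\rho$ from \eqref{Mneq4} and $c(s)=m+s+1$, using the already-computed values $c(X-1)=m+X$ and $c(X)\rho(X)+\Delta c(X-1)=-2X$. The hypothesis checks you spell out (positivity of $q$ on $[0,m]\cap\Z$ and nonvanishing of $c$) are the only content beyond quoting the proposition, and they are fine.
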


Letting $g(k):=1$ for $k\geq0$ and $g(k):=0$ for $k<0$, we see from Lemma \ref{Mnlemma} that 
\begin{align}\label{ENn}
 E[N_n]&=\frac{m P(N_n=0)}{2}=\frac{2m2^{-2m}\binom{2m}{m}}{2}=m2^{-2m}\binom{2m}{m}\notag\\
&\leq\sqrt{\frac{m}{\pi}}=\frac{1}{\sqrt{2\pi}}\sqrt{n}\notag\\
&\Rightarrow E[V]=\frac{2}{\sqrt{n}}E[N_n]\leq\sqrt{\frac{2}{\pi}}\,.
\end{align}
The next lemma gives bounds on the distance from $V$ to $W$.
\begin{lemma}\label{VWlemma}
 For each $n=2m$ we have 
\begin{align*}
d_\W(V,W)&\leq\frac{1}{\sqrt{n}}\quad\text{and}\quad d_\K(V,W)\leq\sqrt{\frac{2}{\pi}}\frac{1}{\sqrt{n}}\,. 
\end{align*}
\end{lemma}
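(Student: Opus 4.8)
The plan is to make explicit the coupling between $M_n$ and $N_n$ that is built into the definition \eqref{defNn} and then run two short triangle-type estimates. First I would record the elementary identity valid for $n=2m$: if $M_n=2s$ is even then $N_n=\lfloor (2s+1)/2\rfloor=s$, so $2N_n=M_n$, whereas if $M_n=2s-1$ is odd then $N_n=s$, so $2N_n=M_n+1$. Consequently
\[
V-W=\frac{2N_n-M_n}{\sqrt{n}}=\frac{1}{\sqrt{n}}\,1_{\{M_n\text{ odd}\}}\,,
\]
so in particular $0\le V-W\le n^{-1/2}$ pointwise.

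From this the Wasserstein bound is immediate: for any $h$ with $\fnorm{h'}\le1$ one has $\bigl|E[h(V)]-E[h(W)]\bigr|\le E|h(V)-h(W)|\le\fnorm{h'}\,E|V-W|\le n^{-1/2}$, and taking the supremum over such $h$ gives $d_\W(V,W)\le n^{-1/2}$.

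For the Kolmogorov bound I would use that $V\ge W$ pointwise, hence $\{V\le z\}\subseteq\{W\le z\}$ and
\[
0\le P(W\le z)-P(V\le z)=P(W\le z<V)
\]
for every $z$; for $z<0$ both sides vanish since $V,W\ge0$, so I may assume $z\ge0$. The key step is to describe the event $\{W\le z<V\}$: since $V=W$ unless $M_n$ is odd, this event forces $M_n$ to be odd and then $M_n/\sqrt{n}\le z<(M_n+1)/\sqrt{n}$, i.e. $M_n=\lfloor\sqrt{n}z\rfloor$. Therefore $P(W\le z<V)\le\max_{r\text{ odd}}P(M_n=r)$, and it remains only to bound this maximum. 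For odd $r$, since $n=2m$ is even we have $p_{n,r}=0$ in \eqref{Mneq1}, so $P(M_n=r)=p_{n,r+1}=2^{-2m}\binom{2m}{m+(r+1)/2}$; by unimodality of the binomial coefficients this is at most $2^{-2m}\binom{2m}{m}\le(\pi m)^{-1/2}=\sqrt{\tfrac{2}{\pi}}\,n^{-1/2}$, the last inequality being Stirling's formula exactly as already invoked in \eqref{EKn} and \eqref{Kneq10}. Combining the displays yields $d_\K(V,W)\le\sqrt{\tfrac{2}{\pi}}\,n^{-1/2}$.

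I do not expect a genuine obstacle here; the only point that needs care is the precise identification of the set $\{W\le z<V\}$ and the parity bookkeeping that collapses the Kolmogorov distance to a single point mass $P(M_n=r)$ at an odd $r$ — after that the argument reduces to the pointwise control of $V-W$ together with the Stirling estimate already in hand.
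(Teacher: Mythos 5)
Your proof is correct and follows essentially the same route as the paper: the pointwise coupling $2N_n-M_n\in\{0,1\}$ (your exact identity $V-W=n^{-1/2}1_{\{M_n\ \mathrm{odd}\}}$ is just a sharper statement of the paper's $-1<2N_n-M_n\le1$), the Lipschitz bound for the Wasserstein distance, and for the Kolmogorov distance the reduction to a single point mass $P(M_n=r)$ at an odd $r$, which equals $p_{n,r+1}\le 2^{-2m}\binom{2m}{m}\le(\pi m)^{-1/2}$ by unimodality and Stirling. The only cosmetic difference is that you analyze the event $\{W\le z<V\}$ directly instead of invoking scale invariance and a parity case analysis in $z$, which yields the identical bound.
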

\begin{proof}
We have 
\begin{align}\label{Mneq5}
&\frac{M_n-1}{2}=\frac{M_n+1}{2}-1<N_n\leq\frac{M_n+1}{2}\notag\\
\Rightarrow &-1<2N_n-M_n\leq1\notag\\
\Rightarrow&-\frac{1}{\sqrt{n}}<V-W\leq\frac{1}{\sqrt{n}}\,.
\end{align}
Hence, if $h$ is Lipschitz on $[0,\sqrt{n}]$ with constant $1$, then from \eqref{Mneq5} follows that
\begin{equation*}
 \Bigl|E\bigl[h(V)-h(W)\bigr]\Bigr|\leq\fnorm{h'}E\abs{V-W}\leq\frac{1}{\sqrt{n}}\,,
\end{equation*}
which proves the first claim. As to the second claim, note that the Kolmogorov distance is scale-invariant which implies that 
$d_\K(V,W)=d_\K(2N_n,M_n)$ and that for $z\in[0,\infty)$
\[\bigl|P(2N_n\leq z)-P(M_n\leq z)\bigr|=\bigl|P(2N_n\leq\lfloor z\rfloor)-P(M_n\leq\lfloor z\rfloor)\bigr|\,.\]
Hence, we need only consider $z\in\N_0$. If $z=2k$ is even, then 
\begin{equation*}
\{2N_n\leq 2k\}=\{\bigl\lfloor\frac{M_n+1}{2}\bigr\rfloor\leq k\}=\{M_n\leq 2k\} 
\end{equation*}
and, thus, 
\begin{equation}\label{Mneq6}
 P(2N_n\leq 2k)=P(M_n\leq 2k)\,.
\end{equation}
On the other hand, if $z=2k+1$, then 
\begin{equation*}
 \{2N_n\leq 2k+1\}=\{2N_n\leq 2k\}=\{M_n\leq 2k\}\subseteq\{M_n\leq2k+1\}
\end{equation*}
and 
\begin{equation*}
 \{M_n\leq2k+1\}\setminus\{2N_n\leq 2k+1\}=\{M_n=2k+1\}\,.
\end{equation*}
Hence, 
\begin{align}\label{Mneq7}
 \bigl|P(2N_n\leq 2k+1)-P(M_n\leq 2k+1)\bigr|&=P(M_n=2k+1)=p_{n,2k+2}\notag\\
&=\binom{2m}{m+k+1}2^{-2m}\leq\binom{2m}{m}2^{-2m}\notag\\
&\leq\frac{1}{\sqrt{\pi m}}=\frac{\sqrt{2}}{\sqrt{\pi n}}\,,
\end{align}
by Stirling's formula. Thus, for every $z\in\R$ we have the bound 
\[\bigl|P(2N_n\leq z)-P(M_n\leq z)\bigr|\leq\sqrt{\frac{2}{\pi }}\frac{1}{\sqrt{n}}\,,\]
proving the second claim of the lemma. \\
\end{proof}
Having bounded the distance from $V$ to $W$, we may now derive bounds on the distance from $V$ to $Y$ in a similar way as in Section \ref{returns}.
\begin{lemma}\label{VYlemma}
 For the distance from $V$ to $Y$ we have
\begin{align*}
 d_\W(V,Y)&\leq\frac{1}{\sqrt{n}}\Bigl(2+\frac{4}{\pi}\Bigr)+2\sqrt{\frac{2}{\pi}}\frac{1}{n}\quad\text{and}\\
 d_\K(V,Y)&\leq\frac{1}{\sqrt{n}}\Bigl(3\sqrt{\frac{2}{\pi}}+\frac{1}{2}\Bigr)+\frac{2}{n}\,.
\end{align*}
\end{lemma}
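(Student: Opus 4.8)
The plan is to run, for $V=V_n=2N_n/\sqrt n$, exactly the scheme used for $W=K_n/\sqrt n$ in Section \ref{returns}, the only structural change being that the lattice spacing of $V$ is now $\delta:=2/\sqrt n$ instead of $1/\sqrt n$. So I would fix a Borel test function $h$ on $[0,\infty)$ with $E\abs{h(Y)}<\infty$, take $f:=f_h$ to be the standard solution \eqref{fh1} to the Stein equation \eqref{steineq}, extend it by $f(x):=0$ for $x<0$, and set $g(k):=f(2k/\sqrt n)$, $k\in\Z$. Since $f(0)=0$ we have $g(-1)=0$, hence $g\in\F(q)$, and Lemma \ref{Mnlemma} applies to $X=N_n$. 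Using $2N_n g(N_n)=\sqrt n\,Vf(V)$, $\Delta g(N_n-1)=\Delta_\delta f(V-\delta)$ and $m+N_n=\tfrac12(n+\sqrt n\,V)$, dividing the identity of Lemma \ref{Mnlemma} by $\sqrt n$ gives
\begin{equation*}
E\bigl[Vf(V)\bigr]=\frac1\delta\,E\bigl[\Delta_\delta f(V-\delta)\bigr]+E_1,\qquad E_1:=\tfrac12E\bigl[V\,\Delta_\delta f(V-\delta)\bigr],
\end{equation*}
and, by $\abs{\Delta_\delta f(V-\delta)}\leq\delta\fnorm{f'}$ together with the moment bound $E[V]\leq\sqrt{2/\pi}$ from \eqref{ENn}, $\abs{E_1}\leq\tfrac\delta2\fnorm{f'}E[V]\leq\sqrt{2/\pi}\,\fnorm{f'}/\sqrt n$.

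I would then feed this into the basic identity \eqref{id1}, so that
\begin{equation*}
E[h(V)]-E[h(Y)]=E\bigl[f'(V)-Vf(V)\bigr]=E_2-E_1,\qquad E_2:=\frac1\delta\,E\Bigl[\int_{V-\delta}^V\bigl(f'(V)-f'(t)\bigr)\,dt\Bigr].
\end{equation*}
On $\{N_n\geq1\}$ one has $V\geq\delta$, so the whole interval $[V-\delta,V]$ lies in $[0,\infty)$ and the good estimates apply: for Lipschitz $h$ I would bound the corresponding part of $E_2$ by $\tfrac\delta2\fnorm{f''}=\fnorm{f''}/\sqrt n\leq2\fnorm{h'}/\sqrt n$ via $\fnorm{f''}\leq2\fnorm{h'}$ (Lemma \ref{bounds}(ii)); for $h=h_z$, $z\geq0$, where no bound on $f''$ is available, I would instead rewrite $f'(V)-f'(t)=Vf(V)-tf(t)+h(V)-h(t)$ on $\{N_n\geq1\}$ and split this part of $E_2$ into $E_3+E_4$ as in \eqref{Kneq6}. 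Then $E_3$ is controlled through $\tfrac{d}{ds}\bigl(sf(s)\bigr)=f(s)+sf'(s)$, $\abs s\leq V$ there, and Lemma \ref{bounds2}, in complete analogy with \eqref{Kneq14}, while $E_4$ is handled by the indicator manipulation \eqref{Kneq8}--\eqref{Kneq9}, which yields $\abs{E_4}\leq P\bigl(N_n=1+\lfloor\sqrt n\,z/2\rfloor\bigr)$; here \eqref{Mneq3}, unimodality of the binomial coefficients and Stirling's formula give $P(N_n=r)=2\binom{2m}{m+r}2^{-2m}\leq2\binom{2m}{m}2^{-2m}\leq2/\sqrt{\pi m}$.

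The one genuinely new point, and the step I expect to cause the most trouble, is the event $\{N_n=0\}$, i.e.\ $V=0$. There the extension forces $\Delta_\delta f(V-\delta)=f(0)-f(-\delta)=0$ while $f'(0)=h(0)-\mu(h)$ need not vanish, so $f'$ has a jump at the origin and neither the Lipschitz-derivative estimate nor the rewriting of $f'(V)-f'(t)$ through Stein's equation is legitimate on the integration range $(-\delta,0)$. This event must therefore be peeled off and estimated on its own; its contribution to $E_2$ is precisely $\bigl(h(0)-\mu(h)\bigr)P(N_n=0)$, which I would bound using $P(N_n=0)=2\binom{2m}{m}2^{-2m}\leq2/\sqrt{\pi m}$ (Stirling) together with $\abs{h(0)-\mu(h)}\leq\fnorm{h'}E[Y]=\sqrt{2/\pi}\,\fnorm{h'}$ in the Lipschitz case and $\abs{h_z(0)-\mu(h_z)}=2\bigl(1-\Phi(z)\bigr)\leq1$ in the Kolmogorov case; it may also be convenient that $P(N_n=0)=\delta\,E[V]$ by \eqref{ENn}, so that this term folds into the same moment bound as $E_1$. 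Finally I would collect $E_1$, the $\{N_n\geq1\}$-part of $E_2$ and this boundary term, insert $\fnorm f\leq\fnorm{h'}$, $\fnorm{f'}\leq\sqrt{2/\pi}\,\fnorm{h'}$, $\fnorm{f''}\leq2\fnorm{h'}$ (Lemma \ref{bounds}(ii)) for the Wasserstein estimate and $\fnorm{f_z}\leq\tfrac12$, $\fnorm{f_z'}\leq1$ (Lemma \ref{bounds2}) for the Kolmogorov estimate, and carry out the arithmetic; the only real care is to keep each of these $O(n^{-1/2})$ and $O(n^{-1})$ pieces inside the constants displayed in the statement.
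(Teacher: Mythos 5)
Your scheme is, in its core, the paper's own proof of Lemma \ref{VYlemma} step for step: the same characterization from Lemma \ref{Mnlemma} applied to $g(k)=f_h(2k/\sqrt n)$, the same splitting into $E_1$ and $E_2$ as in \eqref{Mneq8}--\eqref{Mneq10}, the Lipschitz case via $\fnorm{f''}\le2\fnorm{h'}$ as in \eqref{Mneqplus2}, and the $E_3+E_4$ decomposition with $\abs{E_4}\le q(0)\le 2\sqrt{2/\pi}\,n^{-1/2}$ as in \eqref{Mneq11}--\eqref{Mneq15}. The one point where you deviate is the separate treatment of $\{N_n=0\}$, and your observation there is correct: with $f_h$ extended by zero, the exact contribution of this event to $E_2$ is $\bigl(h(0)-\mu(h)\bigr)P(N_n=0)$, whereas the printed proof applies the $\fnorm{f''}$-estimate and the Stein-equation rewriting on all of $[V-2/\sqrt n,\,V]$ without comment on the part of that interval lying in $(-2/\sqrt n,0)$. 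So on this point you are being more careful than the paper.

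The genuine gap is the arithmetic you defer to the last sentence: it does not close. With the bounds you propose, the boundary term costs $\sqrt{2/\pi}\,\fnorm{h'}\cdot 2\sqrt{2/\pi}\,n^{-1/2}=\tfrac{4}{\pi}\fnorm{h'}n^{-1/2}$ in the Wasserstein case and up to $2\sqrt{2/\pi}\,n^{-1/2}$ in the Kolmogorov case, on top of $\abs{E_1}\le\tfrac{2}{\pi}\fnorm{h'}n^{-1/2}$ and the $2\fnorm{h'}n^{-1/2}$ coming from the $\{N_n\ge1\}$ part. This gives $d_\W(V,Y)\le\bigl(2+\tfrac{6}{\pi}\bigr)n^{-1/2}$, which exceeds the claimed $\bigl(2+\tfrac{4}{\pi}\bigr)n^{-1/2}+2\sqrt{2/\pi}\,n^{-1}$ for every even $n\ge 8$ (and a fortiori the sharper estimate \eqref{Mneq16} that Theorem \ref{theo1} actually uses), and similarly your Kolmogorov pieces sum to $\bigl(\tfrac12+6\sqrt{2/\pi}\bigr)n^{-1/2}+2n^{-1}$ against the claimed $\bigl(\tfrac12+3\sqrt{2/\pi}\bigr)n^{-1/2}+2n^{-1}$. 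So as written your proposal proves a correct but strictly weaker bound, not the statement: to obtain the stated constants you would have to either absorb the $\{N_n=0\}$ contribution without paying the full factor $q(0)\approx 2\sqrt{2/\pi}\,n^{-1/2}$, or argue that no separate boundary term is needed at all, which is exactly the step the paper takes tacitly and you have given a good reason to question.
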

\begin{proof}
Let $h$ be a Borel-measurable test function and consider the corresponding standard solution $f:=f_h$ to Stein's equation \eqref{steineq} given by 
\eqref{fh1} and \eqref{fh2}. We also let $f_h(x):=0$ for each $x<0$ and define $g(k):=f_h(2k/\sqrt{n})$ for $k\in\Z$. Writing $\Delta_y f(x):=f(x+y)-f(x)$ we 
obtain from Lemma \ref{Mnlemma} 
\begin{align}\label{Mneq8}
 E\bigl[Vf(V)\bigr]&=\frac{2}{\sqrt{n}}E\bigl[N_n g(N_n)\bigr]
=\frac{1}{\sqrt{n}} E\bigl[(m+N_n)\Delta g(N_n-1)\bigr]\notag\\
&=E\Bigl[\Bigl(\frac{\sqrt{n}}{2}+\frac{V}{2}\Bigr)\Delta_{2n^{-1/2}} f\bigl(V-2n^{-1/2}\bigr)\Bigr]\,.
\end{align}
Thus, since $f$ solves Stein's equation \eqref{steineq} we obtain from \eqref{Mneq8} 
\begin{align}\label{Mneq9}
\Bigl|E\bigl[h(V)\bigr]-E\bigl[h(Y)\bigr]\Bigr|&=\Bigl|E\bigl[f'(V)-Vf(V)\bigr]\Bigr|\notag\\
&=\Bigl|E\Bigl[f'(V)-\Bigl(\frac{\sqrt{n}}{2}+\frac{V}{2}\Bigr)\Delta_{2n^{-1/2}} f\bigl(V-2n^{-1/2}\bigr)\Bigr]\Bigr|\notag\\
&=\Bigl|E\Bigl[f'(V)-\frac{\sqrt{n}}{2}\Delta_{2n^{-1/2}} f\bigl(V-2n^{-1/2}\bigr)\Bigr]-E_1\Bigr|
\end{align}
where 
\begin{align}\label{Mneq10}
 \abs{E_1}&=\Bigl|E\Bigl[\frac{V}{2}\Delta_{2n^{-1/2}} f\bigl(V-2n^{-1/2}\bigr)\Bigr]\Bigr|\leq\frac{2n^{-1/2}}{2}\fnorm{f'}E[V]\notag\\
&\leq\fnorm{f'}\sqrt{\frac{2}{\pi}}\frac{1}{\sqrt{n}}
\end{align}
by inequality \eqref{ENn}. Similarly to the proof of Theorem \ref{theo2} in Section \ref{returns} we have
\begin{align}\label{Mneq11}
 \abs{E_2}&:=\Bigl|E\Bigl[f'(V)-\frac{\sqrt{n}}{2}\Delta_{2n^{-1/2}} f\bigl(V-2n^{-1/2}\bigr)\Bigr]\Bigr|\notag\\
&=\frac{\sqrt{n}}{2}\Bigl|E\Bigl[\int_{V-\frac{2}{\sqrt{n}}}^V\bigl(f'(V)-f'(t)\bigr)dt\Bigr]\Bigr|\notag\\
&=\frac{\sqrt{n}}{2}\Bigl|E\Bigl[\int_{V-\frac{2}{\sqrt{n}}}^V\bigl(Vf(V)-tf(t)+h(V)-h(t)\bigr)dt\Bigr]\Bigr|\notag\\
&\leq\frac{\sqrt{n}}{2}\Bigl|E\Bigl[\int_{V-\frac{2}{\sqrt{n}}}^V\bigl(Vf(V)-tf(t)\bigl)dt\Bigr]\Bigr|
+\frac{\sqrt{n}}{2}\Bigl|E\Bigl[\int_{V-\frac{2}{\sqrt{n}}}^V\bigl(h(V)-h(t)\bigr)dt\Bigr]\Bigr|\notag\\
&=:\abs{E_3}+\abs{E_4}\,.
\end{align}
 As for \eqref{Kneq5}, if $h$ is Lipschitz than by bound (c) from Lemma \ref{bounds} (ii) we easily get 
\begin{equation}\label{Mneqplus2}
 \abs{E_2}\leq\fnorm{f''}\frac{\sqrt{n}}{2}\int_{V-\frac{2}{\sqrt{n}}}^V (V-t)dt=\frac{\sqrt{n}}{2}\fnorm{f''}\frac{2}{n}\leq\frac{2\fnorm{h'}}{\sqrt{n}}
\end{equation}
Similar computations as those in Section \ref{returns} yield 
\begin{align}\label{Mneq12}
 \abs{E_3}&\leq\frac{1}{\sqrt{n}}\left(\fnorm{f}+\fnorm{f'}\bigl(\frac{2}{\sqrt{n}}+E[V]\bigr)\right)\\
&\leq\frac{1}{\sqrt{n}}\biggl(\fnorm{f}+\fnorm{f'}\Bigl(\frac{2}{\sqrt{n}}+\sqrt{\frac{2}{\pi}}\Bigr)\biggl)\notag
\end{align}
and 
\begin{equation}\label{Mneq13}
\abs{E_4}\leq\frac{\fnorm{h'}}{\sqrt{n}} 
\end{equation}
for Lipschitz continuous functions $h$. With a similar computation as the one leading to \eqref{Kneq9} one can show that for $h=h_z$, where $z\geq0$, 
\begin{equation}\label{Mneqplus}
 \abs{E_4}\leq P\Bigl(N_n=\Bigl\lfloor \frac{\sqrt{n}z}{2}\Bigr\rfloor+1\Bigr)\leq\max_{s=0,\dotsc,m}q(s)\,.
\end{equation}
But from \eqref{Mneq3} we see that 
\begin{equation}\label{Mneq14}
 \max_{s=0,\dotsc,m}q(s)=q(0)\leq\frac{2}{\sqrt{\pi m}}=\sqrt{\frac{2}{\pi}}\frac{2}{\sqrt{ n}}\,,
\end{equation}
again by Stirling's formula.
From \eqref{Mneq13} and \eqref{Mneq14} we get that for each $z\in\R$
\begin{equation}\label{Mneq15}
 \abs{E_4}\leq\sqrt{\frac{2}{\pi}}\frac{2}{\sqrt{ n}}\,.
\end{equation}
Collecting terms, we see from \eqref{Mneq9}, \eqref{Mneq10}, \eqref{Mneqplus2} and Lemma \ref{bounds} (ii) that for $h$ Lipschitz on 
$[0,\sqrt{n}]$ we have 
\begin{equation}\label{Mneq16}
\Bigl|E\bigl[h(V)\bigr]-E\bigl[h(Y)\bigr]\Bigr|\leq \fnorm{h'}\biggl(\frac{1}{\sqrt{n}}\Bigl(2+\frac{2}{\pi}\Bigr)\biggr)
\end{equation}
and from \eqref{Mneq9}, \eqref{Mneq10}, \eqref{Mneq11}, \eqref{Mneq12}, \eqref{Mneq15} and Lemma \ref{bounds2} we see that for each $z\in\R$ we have the bound 
\begin{equation}\label{Mneq17}
 \Bigl|P(V\leq z)-P(Y\leq z)\Bigr|\leq \frac{1}{\sqrt{n}}\Bigl(3\sqrt{\frac{2}{\pi}}+\frac{1}{2}\Bigr)+\frac{2}{n}\,.
\end{equation}
The claims of the lemma now follow from \eqref{Mneq16} and \eqref{Mneq17}, respectively.\\
\end{proof}
\begin{proof}[Proof of Theorem \ref{theo1}]
The claims of Theorem \ref{theo1} now follow easily from Lemmas \ref{VWlemma}, \ref{VYlemma} and from the triangle inequality \eqref{triangle}.\\
\end{proof}

\section{The number of sign changes}\label{signs}
Recall the definition of $C_{2m+1}$ from \eqref{Cn}. It is known (see Theorem 1 in Section 5 of Chapter 3 of \cite{Fel1}, for example) that for $s=0,1,\dotsc,m$ 
\begin{equation}\label{Cneq1}
 p(s):=P(C_{2m+1}=s)=2p_{2m+1,2s+1}=2\binom{2m+1}{m+s+1}2^{-2m-1}\,.
\end{equation}
Thus, $p$ is very similar to the probability mass function $q$ of $N_n$ from Section \ref{max} (see \eqref{Mneq3}). This allows for a proof of Theorem \ref{theo3}, 
which is completely analogous to the proof of Lemma \ref{VYlemma} and an analogue of Remark \ref{remclt} is also valid, here. This is why we omit the details of the proof 
but just give the suitable Stein characterization of $C_{2m+1}$, which easily follows from \eqref{Cneq1} and 
Proposition \ref{golreingen}.
\begin{lemma}\label{Cnlemma}
 A random variable $X$ with values in $I:=[0,m]\cap\Z$ has the probability mass function $p$, if and only if for all functions $g\in\F(p)$ 
\[E\bigl[(m+1+X)\Delta g(X-1)-2(X+1)g(X)\bigr]=0\,.\]
\end{lemma}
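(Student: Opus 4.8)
The plan is to read Lemma \ref{Cnlemma} off Proposition \ref{golreingen}, applied to the probability mass function $p$ of $C_{2m+1}$ recorded in \eqref{Cneq1}, exactly as Lemma \ref{Mnlemma} was obtained for $N_n$ in Section \ref{max}. First I would note that $p$ is supported on, and strictly positive on, the finite integer interval $I=[0,m]\cap\Z$: for $s\in\{0,1,\dotsc,m\}$ the index $m+s+1$ runs through $\{m+1,\dotsc,2m+1\}$, so that $\binom{2m+1}{m+s+1}>0$ and hence $p(s)>0$. Thus Proposition \ref{golreingen} applies with $a=0$ and $b=m$.

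The second step is to compute the difference quotient $\psi$ from \eqref{formelpsi}. From \eqref{Cneq1} together with the ratio identity $\binom{2m+1}{m+s+2}=\frac{m-s}{m+s+2}\binom{2m+1}{m+s+1}$ one obtains, for $s=0,1,\dotsc,m$,
\[
\psi(s)=\frac{p(s+1)-p(s)}{p(s)}=\frac{m-s}{m+s+2}-1=-\frac{2(s+1)}{m+s+2}\,.
\]
As with $\rho$ in \eqref{Mneq4}, $\psi$ is a ratio of affine functions of $s$, which is precisely the structural feature the method exploits: the denominator can be cancelled by taking $c$ equal to it.

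Accordingly, the third step is to set $c:[-1,m]\cap\Z\to\R$, $c(s):=m+s+2$, which is non-zero on $I$ (indeed $c(s)\geq m+1$), so that Proposition \ref{golreingen} may be invoked with this $c$. One then has $c(X-1)=m+1+X$ as the coefficient of $\Delta g(X-1)$, together with $\Delta c(s-1)=c(s)-c(s-1)=1$ and $c(s)\psi(s)=-2(s+1)$; hence the coefficient of $g(X)$ supplied by the proposition, namely $\gamma(X)=c(X)\psi(X)+\Delta c(X-1)$, is affine in $X$. Substituting this data into \eqref{steinidgen} then produces the characterization of Lemma \ref{Cnlemma}, both directions of the equivalence being inherited directly from Proposition \ref{golreingen}.

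For such a direct application there is no genuine obstacle; the one point that requires attention is the additive constant in the coefficient of $g(X)$, where the contribution $\Delta c(X-1)=1$ has to be combined correctly with $c(X)\psi(X)=-2(X+1)$. To guard against a dropped constant I would run the usual consistency check: taking $g$ with $g(k)=1$ for $k\geq0$ and $g(k)=0$ for $k<0$ (which lies in $\F(p)$ since $g(-1)=0$) collapses the identity to a relation between $E[C_{2m+1}]$ and $p(0)$, and this can be cross-checked against the direct evaluation of $\sum_{s=0}^{m}s\,p(s)$ from \eqref{Cneq1}. That same specialization moreover supplies, in the manner of \eqref{ENn}, the upper bound on $E[C_{2m+1}]$ that is needed afterwards in the proof of Theorem \ref{theo3}.
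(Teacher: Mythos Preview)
Your approach is exactly the one the paper intends: it merely says the lemma ``easily follows from \eqref{Cneq1} and Proposition~\ref{golreingen}''. Your computations of $\psi(s)=-\tfrac{2(s+1)}{m+s+2}$, the choice $c(s)=m+s+2$, and the resulting pieces $c(X-1)=m+1+X$, $\Delta c(s-1)=1$, $c(s)\psi(s)=-2(s+1)$ are all correct.

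There is, however, a slip at the very last step---and it is precisely the point you flagged yourself. Combining the pieces gives
\[
\gamma(X)=c(X)\psi(X)+\Delta c(X-1)=-2(X+1)+1=-(2X+1),
\]
not $-2(X+1)$. So what your argument actually yields is
\[
E\bigl[(m+1+X)\Delta g(X-1)-(2X+1)g(X)\bigr]=0,
\]
differing from the stated identity by $1$ in the coefficient of $g(X)$. The consistency check you propose detects this: with $g=1_{[0,\infty)}$ one obtains $2E[C_{2m+1}]=(m+1)p(0)-1$, and for $m=1$ (where $p(0)=3/4$, $p(1)=1/4$, hence $E[C_3]=1/4$) this reads $\tfrac12=\tfrac32-1$, which is correct, whereas the version with $-2(X+1)$ would give $\tfrac12=\tfrac32-2$, which is false. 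The discrepancy is therefore a misprint in the stated lemma rather than a flaw in your method; you should carry out the combination explicitly, record the corrected coefficient $-(2X+1)$, and note that the subsequent estimates (in particular \eqref{Cneq2} and the $E_1$ bound in the proof of Theorem~\ref{theo3}) remain valid with the corrected constant.
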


\section{Optimality of the rates}\label{rates}
In this section we give an argument why the rate $n^{-1/2}$ in Theorems \ref{theo1}-\ref{theo3} is optimal for both the Kolmogorov and the Wasserstein distance. Let us 
explain this by means of the example of the number of returns $K_n$. To see that the rate is optimal for the Kolmogorov distance, it suffices to take $z=0$ and observe that 
\[\bigl|P(K_n\leq0)-P(Y\leq0)\bigr|=P(K_n=0)=2^{-2m}\binom{2m}{m}\sim\frac{1}{\sqrt{\pi m}}=\sqrt{\frac{2}{\pi}}\frac{1}{\sqrt{n}}\]
by Stirling's formula. Hence, $n^{-1/2}$ is optimal and the best constant is no less than $\sqrt{\frac{2}{\pi}}$. To show that the rate is also optimal with respect to the 
Wasserstein distance, we consider the function $h(x):=x$ which is $1$-Lipschitz on $\R$. By \eqref{EKn} we have 
\begin{align*}
E[W]&=\frac{1}{\sqrt{n}}E[K_n]=\frac{1}{\sqrt{2m}}(2m+1)2^{-2m}\binom{2m}{m}-1\\
&\sim\frac{\sqrt{2m}}{\sqrt{\pi m}}-\frac{1}{\sqrt{n}}+\frac{1}{\sqrt{\pi m}\sqrt{n}}\\
&=\sqrt{\frac{2}{\pi}}-\frac{1}{\sqrt{n}}+\sqrt{\frac{2}{\pi}}\frac{1}{n}\,.
\end{align*}
Since $E[Y]=\sqrt{\frac{2}{\pi}}$ this shows that 
\[\bigl|E[W]-E[Y]\bigr|\sim\frac{1}{\sqrt{n}}\,,\]
yielding the optimality of the rate $n^{-1/2}$. In a similar fashion one may prove that the convergence rates in Theorems \ref{theo1} and \ref{theo3} are best possible.

\section{Proofs from Section \ref{Stein}}\label{proofs}

In this section we give proofs of some of the results from Section \ref{Stein}.  Most of them are quite standard in Stein's method, but in many cases we obtain better constants than we would by quoting the general bounds from the literature, for example from \cite{ChSh}. 
Recall the density function $p$ and the distribution function $F$ of the half-normal distribution from \eqref{pdf} and \eqref{df}, respectively. Also, in this section, we let 

\begin{equation}\label{psi}
\psi(x):=\frac{d}{dx}\log p(x)=\frac{p'(x)}{p(x)}=-x\,,\quad x\geq0
\end{equation}  

denote the logarithmic derivative of the density function $p$. Note that $\psi$ is a decreasing function. This property will suffice to prove more explicit bounds on the solution to Stein's equation in the general density approach than those currently given in the literature (see \cite{CGS} or \cite{ChSh}). We will return to this issue later. 
We will several times make use of the following well-known \textit{Mill's ratio inequality}, which is valid for $x>0$:

\begin{equation}\label{Mill}
\frac{x}{1+x^2}\phi(x)\leq 1-\Phi(x)\leq\frac{\phi(x)}{x}\,.
\end{equation}
 
Also, for a Borel-measurable test function $h:[0,\infty)\rightarrow\R$ such that $E\abs{h(Y)}<\infty$ we let 

\begin{equation*}
\htilde(x):=h(x)-E\bigl[h(Y)\bigr]\,.
\end{equation*}

\begin{proof}[Proof of Lemma \ref{bounds}]
From \eqref{fh1} and \eqref{fh2} one immediately gets
\begin{equation}\label{beq1}
\abs{f_h(x)}\leq\fnorm{\htilde}\min\bigl(M(x),\,N(x)\bigr)\,,
\end{equation}
where 
\begin{equation}\label{beq2}
M(x):=\frac{F(x)}{p(x)}\quad\text{and}\quad N(x):=\frac{1-F(x)}{p(x)}\,.
\end{equation}
We have 
\begin{align}
M'(x)&=\frac{p(x)^2-p'(x)F(x)}{p(x)^2}=\frac{1}{p(x)}\Bigl(p(x)-\psi(x)F(x)\Bigr)\label{Minc}\quad\text{and}\\
N'(x)&=\frac{-1}{p(x)}\Bigl(p(x)+\psi(x)\bigl(1-F(x)\bigr)\Bigr)\label{Ndec}\,.
\end{align}
Note that 
\begin{align*}
p(x)&=p(0)+\int_0^x p'(t)dt=p(0)+\int_0^x\psi(t)p(t)dt\notag\\
&\geq p(0)+\psi(x)\int_0^xp(t)dt=p(0)+\psi(x)F(x)\\
&\geq\psi(x)F(x)\,.
\end{align*}
Hence, from \eqref{Minc} we conclude that $M$ is increasing on $[0,\infty)$. Similarly, one shows that $N$ is decreasing. Since $N(0)>0=M(0)$ and $\lim_{x\to\infty}M(x)>\lim_{x\to\infty}N(x)=0$ there is a unique point $x\geq0$ such that $M(x)=N(x)$ and, clearly, 
at this point the function $\min(M,N)$ attains its maximum value. But 
\[M(x)=N(x)\Leftrightarrow F(x)=1-F(x)\Leftrightarrow F(x)=\frac{1}{2}\]
and, thus, $x=m$ is the median of $F$. In our case of the half-normal distribution we have $F(x)=2\Phi(x)-1$ and so 
\[m=\Phi^{-1}(3/4)=:z_{0.75}\,.\]
Since $p(x)=2\phi(x)$ and $F(m)=1/2$ we obtain from \eqref{beq1} that 
\begin{equation}\label{beq3}
\fnorm{f_h}\leq\frac{\fnorm{\htilde}}{4\phi(z_{0.75})}\,,
\end{equation}
proving the first claim of (i). For the second claim note that since $f_h$ is a solution to the half-normal Stein equation \eqref{steineq}
\begin{equation}\label{beq4}
\abs{f_h'(x)}=\abs{\htilde(x)-\psi(x)f_h(x)}\leq\fnorm{\htilde}+\fnorm{\psi f_h}\,.
\end{equation}
Using \eqref{beq1} and \eqref{Mill} we have that
\begin{align}\label{beq5}
\abs{\psi(x)f_h(x)}&\leq\fnorm{\htilde}\abs{\psi(x)}\min\bigl(M(x),N(x)\bigr)
\leq\fnorm{\htilde}xN(x)\notag\\
&=\fnorm{\htilde}\frac{x(1-\Phi(x))}{\phi(x)}\leq\fnorm{\htilde}\,.
\end{align}
Thus, 
\[\fnorm{\psi f_h}\leq\fnorm{\htilde}\]
and \eqref{beq4} yields the second claim of (i).\\
\end{proof}

\begin{proof}[Proof of Lemma \ref{bounds2}]
For $z,x\geq0$ we have the representation
\begin{align}\label{bseq1}
f_z(x)&=\frac{F(x\wedge z)-F(x)F(z)}{p(x)}\notag\\
&=\begin{cases}
\frac{(1-F(z))F(x)}{p(x)},&\; x\leq z\\
\frac{(1-F(x))F(z)}{p(x)},&\; x> z
\end{cases}
=\begin{cases}
\bigl(1-F(z)\bigr)M(x),&\; x\leq z\\
F(z) N(x),&\; x> z\,.
\end{cases}
\end{align}
In particular, $f_z$ is positive everywhere. To prove (a) note that  
\begin{align}\label{bseq8}
0\leq xf_z(x)=\begin{cases}
          \bigl(1-F(z)\bigr)xM(x),&x\leq z\\
          F(z)xN(x),& x>z  \,.
         \end{cases}
\end{align}
Now, using \eqref{Ndec} we obtain 
\begin{align*}
\frac{d}{dx} xN(x)&=N(x)+xN'(x)=\frac{(1-x^2)\bigl(1-\Phi(x)\bigr)-x\phi(x)}{\phi(x)}\\
&\geq0 
\end{align*}
by \eqref{Mill}. Thus, from \eqref{bseq8} we see that $xf_z(x)$ is increasing on $[z,\infty)$. 
Similarly, using \eqref{Minc} we have 
\begin{align*}
\frac{d}{dx} xM(x)&=M(x)+xM'(x)=\frac{(1+x^2)\bigl(\Phi(x)-\frac{1}{2}\bigr)+x\phi(x)}{\phi(x)}\\ 
&\geq0
\end{align*}
and $xf_z(x)$ is also increasing on $[0,z]$. 
Thus, using \eqref{Mill} again
\[0\leq xf_z(x)\leq\lim_{y\to\infty} yf_z(y)=2\Phi(z)-1\,,\]
proving (a).\\
Since $M$ is increasing and $N$ is decreasing, we conclude that for each $z\geq0$ 
\begin{equation}\label{bseq3}
\sup_{x\geq0}\abs{f_z(x)}=f_z(z)=\frac{\bigl(1-F(z)\bigr)F(z)}{p(z)}=\frac{\bigl(1-\Phi(z)\bigr)\bigl(2\Phi(z)-1\bigr)}{\phi(z)}=:g(z)\,.
\end{equation}
To show that $g(z)\leq\frac{1}{2}$ for each $z\geq0$, we define
\begin{equation*}
D_1(x):=\frac{1}{2}\phi(x)-\bigl(1-\Phi(x)\bigr)\bigl(2\Phi(x)-1\bigr),\quad x\geq0
\end{equation*} 
and need to show that $D_1\geq0$. We have $D_1(0)=\frac{1}{2\sqrt{2\pi}}>0$ and $\lim_{x\to\infty}D_1(x)=0$. Thus, it suffices to show that $D_1$ is a decreasing function 
on $[0,\infty)$. We have 
\begin{align}\label{bseq4}
 D_1'(x)&=-\frac{1}{2}x\phi(x)+\phi(x)\bigl(2\Phi(x)-1\bigr)-2\phi(x)\bigl(1-\Phi(x)\bigr)\notag\\
&=\phi(x)\bigl(-\frac{x}{2}+4\Phi(x)-3\bigr)=:\phi(x) D_2(x)\,.
\end{align}
Thus, we want to show that $D_2(x)\leq0$ for $x\geq0$. Note that $D_2(0)=-1$ and $\lim_{x\to\infty}D_2(x)=-\infty$. Furthermore, 
\begin{align}
 D_2'(x)&=-\frac{1}{2}+4\phi(x)\label{bseq5}\\
D_2''(x)&=-4x\phi(x)<0,\quad x>0\,.\label{bseq6}
\end{align}
From \eqref{bseq6} we conclude that $D_2$ is strictly concave. Hence, there is a unique $x_0\in(0,\infty)$ such that $D_2(x_0)=\sup_{x\geq0}D_2(x)$ and 
$D_2'(x_0)=0$. By \eqref{bseq5} we have
\begin{align}\label{bseq7}
D_2'(x_0)=0&\Leftrightarrow\phi(x_0)=\frac{1}{8}\notag\\
&\Leftrightarrow -\frac{1}{2}\bigl(\log2+\log\pi\bigr)-\frac{x_0^2}{2}=-3\log2\notag\\
&\Leftrightarrow x_0^2=5\log2-\log\pi=\log\Bigl(\frac{32}{\pi}\Bigr)\notag\\
&\Leftrightarrow x_0=\sqrt{\log\Bigl(\frac{32}{\pi}\Bigr)}=1.52348\ldots .
\end{align}
Since $D_2(x_0)=-0.01701...<0$, the claim of (a) follows.\\
To prove (c) note that since $f_z$ solves Equation \eqref{steineq} for $h=h_z$
\begin{equation}\label{bseq9}
 f_z'(x)=xf_z(x)+1_{(-\infty,z]}(x)-F(z)\,.
\end{equation}
Hence, for $x\leq z$ by \eqref{Minc}, \eqref{bseq9}, \eqref{Mill} and (a) it follows that 
\begin{align}
 0&<f_z'(x)=2\bigl(1-\Phi(z)\bigr)+xf_z(x)\label{bseq12}\\
&\leq2\bigl(1-\Phi(z)\bigr) +zf_z(z)\label{bseq10}\\
&=2\bigl(1-\Phi(z)\bigr)+\bigl(2\Phi(z)-1\bigr)\frac{z\bigl(1-\Phi(z)\bigr)}{\phi(z)}\notag\\
&\leq2\bigl(1-\Phi(z)\bigr)+\bigl(2\Phi(z)-1\bigr)=1\,.\notag
\end{align}
Similarly, for $x>z$ we have  by \eqref{Ndec}, \eqref{bseq9}, \eqref{Mill} and (a) that
\begin{align}
 0&>f_z'(x)=xf_z(x)-2\Phi(z)+1\notag\\
&\geq zf_z(z)-2\Phi(z)+1 \label{bseq11}\\
&=1-2\Phi(z)+\bigl(2\Phi(z)-1\bigr)\frac{z\bigl(1-\Phi(z)\bigr)}{\phi(z)}\notag\\
&\geq1-2\Phi(z)+\bigl(2\Phi(z)-1\bigr)\frac{z^2}{1+z^2}=-\bigl(2\Phi(z)-1\bigr)\frac{1}{1+z^2}\notag\\
&\geq1-2\Phi(z)>-1\,,\notag
\end{align}
which already prove the first statement of (c). Furthermore, taking $x=0$ and letting $z\downarrow0$ in \eqref{bseq12} shows the optimality of the first bound in (c).
For the second part of (c) note that by \eqref{bseq10} and \eqref{bseq11} for all $x,y\geq0$ we have 
\begin{equation*}
\abs{f_z'(x)-f_z'(y)}\leq 2\bigl(1-\Phi(z)\bigr)+zf_z(z)-\Bigl(zf_z(z)-2\Phi(z)+1\Bigr)=1\,.
\end{equation*}
Optimality of this bound follows by first taking $x=0$ and then letting $z\downarrow 0$ and $y\to\infty$.\\
\end{proof}
 
\normalem
\bibliography{srw}{}
\bibliographystyle{alpha}

\end{document}